\patchcmd{\section}{\scshape}{\bfseries}{}{}
\renewcommand{\@secnumfont}{\bfseries}
\theoremstyle{plain}
\DeclareMathOperator{\K}{\mathcal{K}^\times}
\DeclareMathOperator{\Cart}{Cart}
\DeclareMathOperator{\CaCl}{CaCl}
\DeclareMathOperator{\Hom}{Hom}
\newcommand\HH{\mathrm{H}}
\DeclareMathOperator{\Spec}{Spec}
\DeclareMathOperator{\Frac}{Frac}
\newcommand\cHH{\check{\mathrm{H}}}
\DeclareMathOperator{\Pic}{Pic}
\theoremstyle{definition}
\newtheorem{mydef}{\textbf{Definition}}[section]
\newtheorem{myeg}[mydef]{\textbf{Example}}
\newtheorem{thm}[mydef]{\textbf{Theorem}}
\newtheorem{rmk}[mydef]{\textbf{Remark}}
\newtheorem{cond}[mydef]{\textbf{Condition}}
\theoremstyle{plain}
\newtheorem*{nothma}{\textbf{Theorem A}}
\newtheorem*{nothmb}{\textbf{Theorem B}}
\newtheorem{mytheorem}[mydef]{\textbf{Theorem}}
\newtheorem{lem}[mydef]{\textbf{Lemma}}
\newtheorem{prop}[mydef]{\textbf{Proposition}}
\newtheorem{cor}[mydef]{\textbf{Corollary}}
\def\rma{{\mathbb{T}}}
\begin{document}

\title{Picard groups for tropical toric schemes}

\author{Jaiung Jun$^1$}
\address{$^1$Binghamton University, Binghamton, NY 13902, USA, e-mail: jjun@math.binghamton.edu}

\author{Kalina Mincheva$^2$}
\address{$^2$Yale University, New Haven, CT 06511, USA, e-mail: kalina.mincheva@yale.edu, phone: (443)240-9747}

\author{Jeffrey Tolliver$^3$}
\address{$^3$Akuna Capital, Cambridge, MA 02139, USA, e-mail: jeff.tolli@gmail.com}

\subjclass[2010]{14T05(primary), 14C22(primary), 16Y60(secondary), 12K10 (secondary), 06F05 (secondary)}

\keywords{tropical geometry, tropical schemes, idempotent semirings, Picard groups}

\dedicatory{}

\maketitle

\begin{abstract}
From any monoid scheme $X$ (also known as an $\mathbb{F}_1$-scheme) one can pass to a semiring scheme (a generalization of a tropical scheme) $X_S$ by scalar extension to an idempotent semifield $S$. We prove that for a given irreducible monoid scheme $X$ (satisfying some mild conditions) and an idempotent semifield $S$, the Picard group $\Pic(X)$ of $X$ is stable under scalar extension to $S$ (and in fact to any field $K$). In other words, we show that the groups $\Pic(X)$ and $\Pic(X_S)$ (and $\Pic(X_K)$) are isomorphic. In particular, if $X_\mathbb{C}$ is a toric variety, then $\Pic(X)$ is the same as the Picard group of the associated tropical scheme.
The Picard groups can be computed by considering the correct sheaf cohomology groups. We also define the group $\CaCl(X_S)$ of Cartier divisors modulo principal Cartier divisors for a cancellative semiring scheme $X_S$ and prove that $\CaCl(X_S)$ is isomorphic to $\Pic(X_S)$. 
\end{abstract}

\section{Introduction}
In recent years, there has been a growing interest in developing a notion of algebraic geometry over more general algebraic structures than commutative rings or fields. The search for such a theory is interesting in its own right. The current work, however, relates to two actively growing sub-fields of that study. \\
The first one is motivated by the search for ``absolute geometry" (commonly known as $\mathbb{F}_1$-geometry or algebraic geometry in characteristic one) which is first mentioned by J.~Tits in \cite{tits1956analogues}; Tits first hints at the existence of a mysterious field of ``characteristic one" by observing a degenerating case of an incidence geometry $\Gamma(K)$ associated to a Chevalley group $G(K)$ over a field $K$; when $K=\mathbb{F}_q$ (a field with $q$ elements), as $q \to 1$, the algebraic structure of $K$ completely degenerates, unlike the geometric structure of $\Gamma(K)$. This is why Tits suggests that $\lim_{q \to 1}\Gamma(K)$ should be a geometry over the field of characteristic one. \\
In \cite{manin1995lectures}, Y.~Manin considers the field of characteristic one from the completely different perspective: for developing a geometric approach to the Riemann hypothesis. Shortly after, in \cite{soule2004varietes}, C.~Soul\'{e} first introduced a notion of algebraic geometry over the field $\mathbb{F}_1$ with one element. Since then A.~Connes and C.~Consani have worked to find a geometric framework which could allow one to adapt the Weil proof of the Riemann hypothesis for function fields to the Riemann zeta function (cf. \cite{con4}, \cite{con1}, \cite{con2}, \cite{connes2014arithmetic}, \cite{connes2016geometry}, \cite{connes2017homological}). \\
The second field to which this work contributes is a new branch of algebraic geometry called tropical geometry. It studies an algebraic variety $X$ over a valued field $k$ by means of its ``combinatorial shadow", called the (set-theoretic) tropicalization of $X$ and denoted $trop(X)$. This is a degeneration of the original variety to a polyhedral complex obtained from $X$ and a valuation on $k$. The combinatorial shadow retains a lot of information about the original variety and encodes some of its invariants. Algebraically, $trop(X)$ is described by polynomials in an idempotent semiring, which is a more general object than a ring -- a semiring satisfies the same axioms that a ring does, except invertibility of addition. However, the tropical variety $trop(X)$ has no scheme structure. In \cite{giansiracusa2016equations}, J.~Giansiracusa and N.~Giansiracusa combine $\mathbb{F}_1$-geometry and tropical geometry in an elegant way to introduce a notion of tropical schemes. This is an enrichment to the structure of $trop(X)$, since a tropical variety is seen as the set of geometric points of the corresponding tropical scheme. The tropical scheme structure is encoded in a ``bend congruence" (cf. \cite{giansiracusa2016equations}) or equivalently in a ``tropical ideal" or a tower of valuated matroids (cf. \cite{maclagan2014tropical}, \cite{maclagan2016tropical}). \\
A natural question and central motivation for this work, is whether one can find a scheme-theoretic tropical Riemann-Roch theorem using the notions from J.~Giansiracusa and N.~Giansiracusa's theory. \\
The problem of finding such an analogue for tropical varieties, which one obtains through set-theoretic tropicalization, has already received a lot of attention. In particular it has been solved in the case of tropical curves. We recall that a tropical curve (the set-theoretical tropicalization of a curve) is a connected metric graph. One can define divisors, linear equivalence and genus of a graph in a highly combinatorial way. 
With these notions M.~Baker and S.~Norine \cite{baker2007riemann} prove a Riemann-Roch theorem for finite graphs while G.~Mikhalkin and I.~ Zharkov \cite{MikhZharkov} and later A.~Gathmann and M.~Kerber \cite{gathmanKerber} solve the problem for metric graphs. Later the Riemann-Roch problem is revisited by O.~Amini and L.~Caporaso \cite{aminiCaporaso} who consider weighted graphs. A generalization of the work of M.~Baker and S.~Norine to higher dimensions has been carried out by D.~Cartwright in \cite{dustinTropComplex2} and \cite{dustinTropComplex}.\\
In \cite{connes2016geometry} A.~Connes and C.~Consani prove a Riemann-Roch statement for the points of a certain Grothendieck topos over (the image of) $\Spec(\mathbb{Z})$, which can be thought of as tropical elliptic curves. An important ingredient to the project of A.~Connes and C.~Consani to attack the Riemann Hypothesis is to develop an adequate version of the Riemann-Roch theorem for geometry defined over idempotent semifields. Notably, there are several fundamental differences between their statement and the tropical Riemann-Roch of M.~Baker and S.~Norine.\\
Recently, in \cite{DhruvlogPic} T.~Foster, D.~Ranganathan, M.~Talpo and M.~Ulirsch investigate the logarithmic Picard group (which is a quotient of the algebraic Picard group by lifts of relations on the tropical curve) and solve the Riemann-Roch problem for logarithmic curves (which are metrized curve complexes). \\
The solution of a scheme-theoretic tropical Riemann-Roch problem requires several ingredients, such as a proper framework for ``tropical sheaf cohomology" and a notion of divisors on tropical schemes, and most importantly ranks of divisors. In this note, we investigate Picard groups of tropical (toric) schemes as the first step towards building scheme-theoretic tropical divisor theory and a Riemann-Roch theorem. To do that, we look at an $\mathbb{F}_1$-model of the tropical scheme, i.e., a monoid scheme. \\
Monoid schemes are related to tropical (more generally semiring) schemes and usual schemes via scalar extension. More precisely, if $X$ is a monoid scheme, say $X = \Spec M$ for a monoid $M$, then $X_K = \Spec K[M]$ is a scheme if $K$ is a field and a semiring scheme if $K$ is a semifield. Note that $X_{\mathbb{C}}$ is a toric variety if $M$ is integral (cancellative), finitely generated, saturated and torsion free and $X_K$ is a tropical scheme (the scheme theoretic tropicalization of $X_{\mathbb{C}}$) if $K$ is the semifield of tropical numbers. When $K$ is a field, the relation between $\Pic(X)$ and $\Pic(X_K)$ has been extensively investigated by J.~Flores and C.~Weibel in \cite{flores2014picard}. They show that in this case $\Pic(X)$ is isomorphic to $\Pic(X_K)$. \\
In this paper, our main interest lies in the tropical setting, i.e., the case when $K$ is an idempotent semifield. We investigate the relation between the Picard group $\Pic(X)$ of a monoid scheme $X$ and $\Pic(X_K)$ for an idempotent semifield $K$. More precisely, when $X_K$ is the lift of an $\mathbb{F}_1$-model $X$ (with an open cover satisfying a mild finiteness condition), we prove that the Picard group is stable under scalar extension. 

\begin{nothma} Let $X$ be an irreducible monoid scheme and $K$ be an idempotent semifield. Suppose that $X$ has an open cover satisfying Condition \ref{condition: condition for open cover}. Then we have that $$\Pic(X) = \Pic(X_K) = \HH^1(X, \mathcal{O}_X^{\times}).$$
\end{nothma}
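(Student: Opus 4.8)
The plan is to compute $\Pic(X)$, $\Pic(X_K)$ and $\HH^1(X,\mathcal O_X^\times)$ all by Čech cohomology with respect to a single carefully chosen affine open cover, and to compare them term by term. Recall first that on any space equipped with a sheaf of (semi)rings an invertible module is a torsor under the sheaf of units, so $\Pic$ is canonically $\HH^1$ of the units sheaf; concretely, trivializing an invertible sheaf on an open cover $\mathfrak U$ produces a Čech $1$-cocycle valued in $\mathcal O^\times$, which identifies $\Pic(X)$ with $\varinjlim_{\mathfrak U}\cHH^1(\mathfrak U,\mathcal O_X^\times)$, and likewise for $X_K$. The role of Condition \ref{condition: condition for open cover} is to supply a finite affine cover $\mathfrak U=\{\Spec M_i\}$ all of whose finite intersections are again spectra of localizations of the $M_i$.

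The first substantive step is a local triviality statement: for a monoid $M$ every invertible $M$-module, and every invertible $K[M]$-module, is free of rank one. Over $M$ this is forced because $\Spec M$ has a unique closed point, so ``locally free of rank one'' is already global freeness; over $K[M]$ it is the semiring counterpart of the classical fact that an affine toric variety has trivial Picard group, which I would prove by a monomial-support/grading argument after localizing at the maximal monomial ideal, the case of a field $K$ being precisely the result of Flores--Weibel. Consequently $\HH^1$ of $\mathcal O^\times$ vanishes on every member of $\mathfrak U$ and on every finite intersection, both before and after base change, so $\mathfrak U$ is a Leray cover for $\mathcal O_X^\times$ and for $\mathcal O_{X_K}^\times$; hence $\cHH^1(\mathfrak U,\mathcal O_X^\times)=\HH^1(X,\mathcal O_X^\times)=\Pic(X)$ and $\cHH^1(\mathfrak U,\mathcal O_{X_K}^\times)=\Pic(X_K)$.

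The crux is the comparison of the two unit sheaves. The monoid scheme $X$ and the semiring scheme $X_K$ share the same underlying topological space, and on a localized monoid semiring the units are exactly the scalar multiples of the invertible monomials, i.e. $(K[N])^\times = K^\times\times N^\times$. Sheafifying over $\mathfrak U$, this yields an isomorphism of sheaves of abelian groups
\[
\mathcal O_{X_K}^\times\;\cong\;\underline{K^\times}\times\mathcal O_X^\times ,
\]
and therefore a direct sum decomposition $\cHH^1(\mathfrak U,\mathcal O_{X_K}^\times)\cong\cHH^1(\mathfrak U,\underline{K^\times})\oplus\cHH^1(\mathfrak U,\mathcal O_X^\times)$. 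Finally, since $X$ is irreducible --- and hence so is every open subset and every finite intersection appearing in $\mathfrak U$ --- every constant sheaf on it is flasque, hence acyclic, so $\cHH^1(\mathfrak U,\underline{K^\times})\hookrightarrow\HH^1(X,\underline{K^\times})=0$. Combining the displayed identities gives $\Pic(X_K)=\cHH^1(\mathfrak U,\mathcal O_X^\times)=\HH^1(X,\mathcal O_X^\times)=\Pic(X)$, as claimed, and running the same argument with an ordinary field in place of $K$ recovers the Flores--Weibel isomorphism. I expect the main obstacle to be the local triviality of invertible sheaves together with the description of the units in the semiring case: one has to develop enough module theory over monoid semirings to establish these two facts, after which the Čech-complex bookkeeping and the use of irreducibility are essentially formal.
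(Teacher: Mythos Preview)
Your overall strategy coincides with the paper's: compute units of $K[M]$ as $K^\times\times M^\times$, show the chosen cover is Leray on both $X$ and $X_K$, compare \v Cech complexes, and kill the $K^\times$ factor using flasqueness of constant sheaves on an irreducible space. Two points deserve correction, though.

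First, your assertion that ``the monoid scheme $X$ and the semiring scheme $X_K$ share the same underlying topological space'' is neither proved nor used in the paper, and it is not clear that it holds in general; the base-change map $\Spec K[M]\to\Spec M$ need not be a homeomorphism. The paper sidesteps this entirely: rather than identifying $\mathcal O_{X_K}^\times$ with $\underline{K^\times}\times\mathcal O_X^\times$ as sheaves on one space, it observes that the \v Cech complex $\mathbf C^\bullet(\mathcal U_K,\mathcal O_{X_K}^\times)$ is term-by-term isomorphic to $\mathbf C^\bullet(\mathcal U,\mathcal K^\times\times\mathcal O_X^\times)$ (Theorem~\ref{almostmainthm}), since both are built from the groups $K[M_{\alpha_0\cdots\alpha_p}]^\times\cong K^\times\times M_{\alpha_0\cdots\alpha_p}^\times$ with compatible restriction maps. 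Your argument goes through once rephrased this way.

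Second, for the local vanishing $\Pic(\Spec K[M])=0$ the paper's argument is simpler and more robust than a grading argument: $K[M]$ has a \emph{unique} maximal ideal $K[M]\setminus K[M]^\times$ (Lemma~\ref{coverofKM}), so every open cover of $\Spec K[M]$ contains $\Spec K[M]$ itself, forcing all higher \v Cech groups to vanish (Theorem~\ref{thm: affine vanishing}). The same trick works for $\Spec M$. Also, Condition~\ref{condition: condition for open cover} only asks that finite intersections be spectra of cancellative monoids, not localizations of the $M_i$.
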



\begin{rmk}
This theorem provides an alternative proof to Theorem 6.6 in \cite{flores2014picard}, giving a new proof to Fulton's result on the Picard group of a toric variety (cf. Remark~\ref{rmk:Fulton}).
\end{rmk}

Recall that a cancellative semiring scheme $X_K$ over an idempotent semifield $K$ is a semiring scheme such that for each open subset $U$ of $X_K$, the semiring $\mathcal{O}_{X_K}(U)$ of sections is cancellative. We construct the group of Cartier divisors and modulo principal Cartier divisors, which we denote by $\CaCl(X_K)$. These are the naive principal Cartier divisors - the ones defined by a single global section. Then we can show that

\begin{nothmb} 
Let $X_K$ be a cancellative semiring scheme over an idempotent semifield $K$. Then $\CaCl(X_K)$ is isomorphic to $\Pic(X_K)$. 
\end{nothmb}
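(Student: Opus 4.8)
The plan is to mimic the classical argument that the Cartier class group and the Picard group coincide on an integral scheme, with the cancellative hypothesis on $\mathcal{O}_{X_K}$ playing the role of integrality. Since $X_K$ is cancellative, for every open $U$ the semiring $\mathcal{O}_{X_K}(U)$ embeds into its total semiring of fractions, and these patch into a sheaf $\mathcal{K}_{X_K}$ together with an injection $\mathcal{O}_{X_K}\hookrightarrow\mathcal{K}_{X_K}$; a Cartier divisor on $X_K$ is then a global section of $\mathcal{K}_{X_K}^\times/\mathcal{O}_{X_K}^\times$, represented by local data $\{(U_i,f_i)\}$ with $f_i\in\mathcal{K}_{X_K}^\times(U_i)$ and $f_i/f_j\in\mathcal{O}_{X_K}^\times(U_i\cap U_j)$, and a principal divisor is one coming from a single global section of $\mathcal{K}_{X_K}^\times$, so that $\CaCl(X_K)=\HH^0(X_K,\mathcal{K}_{X_K}^\times/\mathcal{O}_{X_K}^\times)/\operatorname{im}\HH^0(X_K,\mathcal{K}_{X_K}^\times)$.

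First I would construct the homomorphism $\Phi\colon\CaCl(X_K)\to\Pic(X_K)$. To a Cartier divisor $D=\{(U_i,f_i)\}$ associate the sub-$\mathcal{O}_{X_K}$-semimodule $\mathcal{O}(D)\subseteq\mathcal{K}_{X_K}$ which on $U_i$ is the free rank-one semimodule generated by $f_i^{-1}$; the compatibility condition on the $f_i$ ensures these glue to a well-defined invertible sheaf, independent of the chosen representative of $D$. Since $\mathcal{O}(D_1+D_2)$ is generated locally by $f_i^{-1}g_i^{-1}$, one gets $\mathcal{O}(D_1+D_2)\cong\mathcal{O}(D_1)\otimes_{\mathcal{O}_{X_K}}\mathcal{O}(D_2)$ and $\mathcal{O}(0)=\mathcal{O}_{X_K}$, so $\Phi$ is a group homomorphism; this uses only that tensor products of invertible semimodules behave as in the classical case, which is a local computation. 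Equivalently, $\Phi$ is the connecting map $\HH^0(X_K,\mathcal{K}_{X_K}^\times/\mathcal{O}_{X_K}^\times)\to\cHH^1(X_K,\mathcal{O}_{X_K}^\times)=\Pic(X_K)$.

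Next, injectivity, i.e. that $\operatorname{Ker}\Phi$ is exactly the subgroup of principal divisors. If $\mathcal{O}(D)\cong\mathcal{O}_{X_K}$, let $f\in\mathcal{K}_{X_K}(X_K)$ be the image of $1$ under such an isomorphism. Using cancellativity one checks $f\in\mathcal{K}_{X_K}^\times(X_K)$ and that the isomorphism is multiplication by $f$, so on each $U_i$ one has $f_i^{-1}\mathcal{O}_{X_K}=f\,\mathcal{O}_{X_K}$, hence $f_i f\in\mathcal{O}_{X_K}^\times$ and $D=\operatorname{div}(f^{-1})$ is principal; the converse is immediate. The point where cancellativity is essential is that a morphism of invertible $\mathcal{O}_{X_K}$-semimodules is, after passing to fractions, given by an honest (invertible) multiplier.

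Finally, surjectivity, which I expect to be the main obstacle. Given an invertible sheaf $\mathcal{L}$, I would show that $\mathcal{L}$ embeds into $\mathcal{K}_{X_K}$ with invertible image. Tensoring with $\mathcal{K}_{X_K}$, the $\mathcal{O}_{X_K}^\times$-valued transition functions of $\mathcal{L}$ become $\mathcal{K}_{X_K}^\times$-valued, so $\mathcal{L}\otimes_{\mathcal{O}_{X_K}}\mathcal{K}_{X_K}$ is classified by an element of $\cHH^1(X_K,\mathcal{K}_{X_K}^\times)$; the crux is to show this group vanishes, equivalently that $\mathcal{L}\otimes_{\mathcal{O}_{X_K}}\mathcal{K}_{X_K}\cong\mathcal{K}_{X_K}$. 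In the cancellative setting $\mathcal{K}_{X_K}$ is assembled from fraction semifields and should behave like a flasque ``sheaf of rational functions,'' so this vanishing follows just as the triviality of $\mathcal{K}_X^\times$-torsors does on an integral scheme; the delicate part is verifying that the semiring-theoretic localization genuinely yields a flasque sheaf with the expected stalks. Once $\mathcal{L}\otimes_{\mathcal{O}_{X_K}}\mathcal{K}_{X_K}\cong\mathcal{K}_{X_K}$ is established, a choice of such isomorphism restricts to an injection $\mathcal{L}\hookrightarrow\mathcal{K}_{X_K}$ whose image is locally free of rank one, i.e. $\mathcal{L}\cong\mathcal{O}(D)$ for the Cartier divisor $D$ determined by local generators of that image; then $\Phi(D)=[\mathcal{L}]$, and combined with injectivity this shows $\Phi$ is an isomorphism.
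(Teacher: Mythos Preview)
Your approach is correct and is essentially the same as the paper's: both arguments hinge on the short exact sequence $0\to\mathcal{O}_{X_K}^\times\to\mathcal{K}^\times\to\mathcal{K}^\times/\mathcal{O}_{X_K}^\times\to 0$ and the vanishing of $\HH^1(X_K,\mathcal{K}^\times)$, with your map $\Phi$ being exactly the connecting homomorphism (as you yourself note). The paper, however, is considerably more direct on the point you flag as ``delicate'': since a cancellative semiring scheme is integral and hence irreducible, it has a unique generic point $\eta$, and $\mathcal{K}$ is by definition the \emph{constant} sheaf associated to the function semifield $\mathcal{O}_{X_K,\eta}$; a constant sheaf on an irreducible space is flasque, so $\HH^1(X_K,\mathcal{K}^\times)=0$ immediately, and the long exact sequence then gives $\CaCl(X_K)\cong\Pic(X_K)$ in one line without separately unpacking injectivity and surjectivity.
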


We remark that the Picard group of a curve $C$ is rarely equal to the Picard group of its set-theoretic tropicalization (non-Archimedian skeleton). However, the skeleton of the Jacobian is canonically isomorphic to the Jacobian of the skeleton as principally polarized tropical abelian varieties as shown in \cite{BakerRabinoff}. Moreover, in the case then $C$ is a smooth toric curve, the Picard group of $C$ is equal to the Picard group of the non-Archimedian skeleton. \\


Since the tropical toric schemes considered in this paper have an $\mathbb{F}_1$ model, it will be interesting to understand the relation of the results presented here to the theory of cohomology over $\mathbb{F}_{1^2}$ in \cite{flores2017vcech}. This will allow to show that other invariants of toric varieties also live over $\mathbb{F}_1$.\\

\textbf{Acknowledgments}
The work on this note was started at the Oberwolfach workshop ID: 1525 on Non-commutative Geometry. The authors are grateful for the Institute's hospitality and would like to thank the organizers for providing the opportunity. K.M would also like to thank Dhruv Ranganathan for many helpful conversations. We are thankful to Oliver Lorscheid for suggesting ways to strengthen the results of this note. We would also like to thank the anonymous referees for their valuable comments and suggestions. 
\vspace{0.1cm}

\section{Preliminaries}
In this section, we review some basic definitions and properties of monoid and semiring schemes. We also recall the notion of Picard groups for monoid schemes developed in \cite{flores2014picard} and for semiring schemes introduced in \cite{jun2015cech}. 

\subsection{Picard groups for monoid schemes}

In what follows, by a monoid we always mean a commutative monoid $M$ with an absorbing element $0_M$, i.e., $0_M\cdot m=0_M$ for all $m \in M$. Note that if $M$ is a monoid without an absorbing element, one can always embed $M$ into $M_0=M\cup\{0_M\}$ by letting $0_M\cdot m=0_M$ for all $m \in M$. 

\begin{rmk}
We will use the term ``monoid schemes" instead of ``$\mathbb{F}_1$-schemes" to emphasize that we are employing the minimalistic definition of $\mathbb{F}_1$-schemes based on monoids following A.~Deitmar \cite{Deitmar}, instead of any of the more general constructions that exist in the literature (cf. \cite{con1} or \cite{oliver1}).
\end{rmk}

We recall some important notions which will be used throughout the paper. For the details, we refer the reader to \cite{chu2012sheaves}, \cite{cortinas2015toric}, \cite{Deitmar}, \cite{deitmar2008f1}.

\begin{mydef}\cite[\S1.2 and \S 2.2]{Deitmar}
Let $M$ be a monoid.
\begin{enumerate}
\item
An \emph{ideal} $I$ is a nonempty subset of $M$ such that $MI \subseteq I$. In particular, it contains the absorbing element $0_M$. A proper ideal $I$ is said to be \emph{prime} if $M\backslash I$ is a multiplicative nonempty subset of $M$.
\item
A \emph{maximal ideal} of $M$ is a proper ideal which is not contained in any other proper ideal.
\item
The \emph{prime spectrum} $\Spec M$ of a monoid $M$ is the set of all prime ideals of $M$ equipped with the Zariski topology. A basis for the topology is given by the collection of open sets 
\[
D(f)=\{\mathfrak{p} \in \Spec M \mid f \not\in \mathfrak{p}\},
\]
for all $f \in M$.
\end{enumerate}
\end{mydef}

One can mimic the construction of the structure sheaf on a scheme to define the structure sheaf (of monoids) on the prime spectrum $\Spec M$ of a monoid $M$. The prime spectrum $\Spec M$ together with a structure sheaf is called an \emph{affine monoid scheme}. A locally monoidal space is a topological space together with a sheaf of monoids. A \emph{monoid scheme} is a monoidal space which is locally isomorphic to an affine monoid scheme. As in the classical case, we define a morphism between two locally monoidal spaces $(X,\mathcal{O}_X)$ and $(Y,\mathcal{O}_Y)$ to be a pair $(f,f^\#)$ consisting of a continuous map $f:X \to Y$ and a morphism $f^\#:\mathcal{O}_Y \to f_*\mathcal{O}_X$ of sheaves on $Y$ which is local, i.e., at each point $x \in X$, the induced map $f^\#_x:\mathcal{O}_{Y,f(x)} \to \mathcal{O}_{X,x}$ on stalks is a local homomorphism of monoids. Morphisms of monoid schemes are morphisms of locally monoidal spaces. As in the case of schemes, we call a monoid scheme \emph{irreducible} if the underlying topological space is irreducible. 

\begin{rmk}
\begin{enumerate}
\item
Let $M$ be a monoid. Then $M$ has a unique maximal ideal $\mathfrak{m}=M\backslash M^\times$.
\item 
The category of affine monoid schemes is equivalent to the opposite of the category of monoids. A monoid scheme, in this case, is a functor which is locally representable by monoids. In other words, one can understand a monoid scheme as a \emph{functor of points}. For details, see \cite{pena2009mapping}.
\end{enumerate}
\end{rmk}

Next, we briefly recall the definition of invertible sheaves on a monoid scheme $X$. We refer the readers to \cite{chu2012sheaves} and \cite{flores2014picard} for details.

\begin{mydef}\cite[\S 5]{flores2014picard}\label{def: inv_sheaf}
Let $M$ be a monoid and $X$ be a monoid scheme.
\begin{enumerate}
\item
By an \emph{$M$-set}, we mean a set with an $M$-action.
\item
An \emph{invertible sheaf} $\mathcal{L}$ on $X$ is a sheaf of $\mathcal{O}_X$-sets which is locally isomorphic to $\mathcal{O}_X$.
\end{enumerate}
\end{mydef}
Let $M$ be a monoid and $A,B$ be $M$-sets. One may define the \emph{tensor product} $A\otimes_M B$. Furthermore, if $A$ and $B$ are monoids, $A\otimes_M B$ becomes a monoid in a canonical way. See, \cite[\S 2.2, 3.2]{chu2012sheaves} for details. 

\begin{rmk}
We would like to warn the reader that in the literature the same terminology is used to denote different things. In \cite{flores2014picard}, the authors use the term ``smash product" for tensor product of monoids, whereas in \cite{chu2012sheaves}, the authors use the term ``smash product" only for tensor product of monoids over $\mathbb{F}_1$, the initial object in the category of monoids. We will use the term tensor product to stay compatible with the language of schemes and semiring schemes. 
\end{rmk}

Let $X$ be a monoid scheme and let $\Pic(X)$ be the set of the isomorphism classes of invertible sheaves on $X$. Suppose that $\mathcal{L}_1$ and $\mathcal{L}_2$ are invertible sheaves on $X$ as in Definition~\ref{def: inv_sheaf}. The tensor product $\mathcal{L}_1\otimes_{\mathcal{O}_X}\mathcal{L}_2$ is the sheafification of the presheaf (of monoids) sending an open subset $U$ of $X$ to $\mathcal{L}_1(U)\otimes_{\mathcal{O}_X(U)}\mathcal{L}_2(U)$. It is well--known that $\mathcal{L}_1\otimes_{\mathcal{O}_X}\mathcal{L}_2$ is an invertible sheaf on $X$. Also, as in the classical case, the sheafificaiton $\mathcal{L}_1^{-1}$ of the presheaf (of monoids) sending an open subset $U$ of $X$ to $\Hom_{\mathcal{O}_X(U)}(\mathcal{L}_1(U),\mathcal{O}_X(U))$ becomes an inverse of $\mathcal{L}_1$ with respect to the tensor product in the sense that $\mathcal{L}_1\otimes_{\mathcal{O}_X} \mathcal{L}_1^{-1} \simeq \mathcal{O}_X$. Therefore, $\Pic(X)$ is a group.\\
One can use the classical argument to prove that $\Pic(X)\simeq \HH^1(X,\mathcal{O}_X^{\times})$ (for instance, see \cite[Lemma 5.3.]{flores2014picard}). Recall that for any topological space $X$ and a sheaf $\mathcal{F}$ of abelian groups on $X$, sheaf cohomology $\HH^i(X,\mathcal{F})$ and \v{C}ech cohomology $\cHH^i(X,\mathcal{F})$ agree for $i=0,1$. Therefore, we have
\begin{equation}\label{equation: pic=derived=cech}
\Pic(X)\simeq \HH^1(X,\mathcal{O}_X^{\times}) \simeq \cHH^1(X,\mathcal{O}_X^{\times}). 
\end{equation}

\subsection{Picard groups for semiring schemes}
A \emph{semiring} is a set (with two binary operations - addition and multiplication) that satisfies the same axioms that a ring does, except invertibility of addition. In this paper by a semiring we mean a commutative semiring with at least two distinct elements (a multiplicative identity and an additive identity). In particular, a semiring is a commutative monoid with respect to both operations. A \emph{semifield} is a semiring in which every non-zero element has a multiplicative inverse. For $S$ a semifield, an $S$-algebra is a morphism of semirings $S \rightarrow A$. A semiring $A$ is \emph{idempotent} if $a+a = a$, for all elements $a \in A$. An example of an (idempotent) semifield is the \emph{tropical semifield} which we denote by $\rma$. It is defined on the set $\mathbb{R}\cup \{-\infty\} $, with operations maximum and addition. 

\begin{mydef}
Let $A$ and $B$ be semirings. Then a function $f: A \rightarrow B$ is a morphism of semirings if and only if:
\begin{enumerate}
    \item $f(0_A) = 0_{B}$,
    \item $f(1_A) = 1_{B}$, 
    \item $f(a+a')=f(a)+f(a')$, and $f(aa')=f(a)f(a')$, $\forall a,a'\in A$.
\end{enumerate}
\end{mydef}

\begin{mydef}
Let $A$ be a semiring.
\begin{enumerate}
\item
An \emph{ideal} $I$ of $A$ is an additive submonoid $I$ of $A$ such that $AI \subseteq I$.
\item
An ideal $I$ is said to be \emph{prime} if $I$ is proper and if $ab \in I$ then $a \in I$ or $b \in I$.
\item
A proper ideal $I$ is \emph{maximal} if the only ideal strictly containing $I$ is $A$.
\end{enumerate}
\end{mydef}

There are several definitions of semiring schemes in the literature. One can find a complete list of the proposed structures and the relations between them in \cite{oliver4}. We present the following definition, originally introduced in \cite{giansiracusa2016equations}, which we will use in this paper.

\begin{mydef}
Let $A$ be a semiring algebra and $X=\Spec A$ be the set of all prime ideals of $A$. We endow $X$ with the Zariski topology. The topology on $X$ is generated by the sets of the form $D(f)=\{\mathfrak{p} \in X \mid f \not\in \mathfrak{p}\}$ for all $f \in A$.
\end{mydef}

Let $A$ be a semiring and $S$ be a multiplicative subset of $A$. We recall the construction of the \emph{localization} $S^{-1}A$ of $A$ at $S$ from \cite[\S11]{semibook}. Let $M=A\times S$. We impose an equivalence relation on $M$ in such a way that $(a,s) \sim (a',s')$ if and only if $\exists t \in S$ such that $t a s'=t a's$. The underlying set of $S^{-1}A$ is the set of equivalence classes of $M$ under $\sim$. We let $\frac{a}{s}$ be the equivalence class of $(a,s)$. Then one can define the following binary operations $+$ and $\cdot$ on $S^{-1}A$:
\[
\frac{a}{s}+\frac{a'}{s'}=\frac{as'+sa'}{ss'}, \quad \frac{a}{s}\cdot \frac{a'}{s'}=\frac{aa'}{ss'}.
\]
It is well--known that $S^{-1}A$ is a semiring with the above operations. Furthermore, we have a canonical homomorphism $S^{-1}:A \to S^{-1}A$ sending $a$ to $\frac{a}{1}$. When $S=A-\mathfrak{p}$ for some prime ideal $\mathfrak{p}$, we denote the localization $S^{-1}A$ by $A_\mathfrak{p}$.\\

Let $A$ be a semiring algebra and $X=\Spec A$ be the prime spectrum of $A$. For each Zariski open subset $U$ of $X$, we define the following set:
\[
\mathcal{O}_X(U)=\{s:U \to \prod_{\mathfrak{p} \in U} A_\mathfrak{p}\},
\]
where $s$ is a function such that $s(\mathfrak{p}) \in A_\mathfrak{p}$ and $s$ is locally representable by fractions. One can easily see that $\mathcal{O}_X$ is a sheaf of semirings on $X$. An \emph{affine semiring scheme} is the prime spectrum $X=\Spec A$ equipped with a structure sheaf $\mathcal{O}_X$. Next, by directly generalizing the classical notion of locally ringed spaces, one can define a locally semiringed space, as a topological space with a sheaf of semirings such that at each point, the stalk has a unique maximal ideal. A \emph{semiring scheme} is a locally semiringed space which is locally isomorphic to an affine semiring scheme. 


A special case of the semiring schemes proposed by J. Giansiracusa and N. Giansiracusa are the $\mathbb{T}$-schemes. They are locally isomorphic to the prime spectrum of a quotient of the polynomial semiring over the tropical semifield (denoted $\rma[x_1, \dots, x_n]$) by a particular equivalence relation, called a ``bend congruence''. In this paper we refer to these schemes as ``tropical schemes''. We point out that in \cite{maclagan2014tropical} and \cite{maclagan2016tropical} the term ``tropical schemes'' is reserved for $\mathbb{T}$-schemes defined by the bend relations of special ideals, called tropical ideals.

We note that every scheme is a semiring scheme, but never a tropical scheme. The reason is that the structure sheaf of a tropical scheme is a sheaf of additively idempotent semirings which are never rings.  

One can extend the familiar notions of invertible sheaves and Picard group of schemes or monoid schemes to the semiring schemes setting. In fact, \v{C}ech cohomology for semiring schemes is introduced in \cite{jun2015cech} and the following is proved. 

\begin{thm}\cite{jun2015cech}
Let $X$ be a semiring scheme. Then $\Pic(X)$ is a group and can be computed via $\HH^1(X,\mathcal{O}_X^{\times})$.
\end{thm}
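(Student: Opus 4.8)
The plan is to adapt, essentially verbatim, the classical identification of $\Pic(X)$ with \v{C}ech $1$-cocycles valued in the unit sheaf; the only place where commutative ring theory enters the classical argument is the computation of the automorphism sheaf of the trivial invertible sheaf, and that computation goes through unchanged over a semiring. Throughout, an invertible sheaf means a sheaf of $\mathcal{O}_X$-semimodules locally isomorphic to $\mathcal{O}_X$, and $\mathcal{O}_X^\times$ denotes the (multiplicative) sheaf of units, a sheaf of abelian groups.

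The first step is to show $\underline{\Aut}_{\mathcal{O}_X}(\mathcal{O}_X) \cong \mathcal{O}_X^\times$ as sheaves of abelian groups. For an open $U$ and an $\mathcal{O}_X|_U$-linear endomorphism $\varphi$ of $\mathcal{O}_X|_U$, one has $\varphi_V(a) = a\,\varphi_V(1)$ for every open $V \subseteq U$, so $\varphi$ is ``multiplication by'' the section $u \in \mathcal{O}_X(U)$ obtained by patching the $\varphi_V(1)$. If moreover $\varphi$ is bijective with inverse $\psi$, then $\psi$ is automatically additive and $\mathcal{O}_X$-linear — apply the injection $\varphi$ to both sides of each identity to be verified — hence $\psi$ is multiplication by some $v \in \mathcal{O}_X(U)$ with $uv = 1$; conversely multiplication by a unit is invertible. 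The assignment $\varphi \mapsto u$ is compatible with restriction, giving the claimed isomorphism of sheaves.

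The second step attaches to an invertible sheaf $\mathcal{L}$ a class $c(\mathcal{L}) \in \cHH^1(X, \mathcal{O}_X^\times)$. Choose a cover $\{U_i\}$ and trivializations $\phi_i \colon \mathcal{O}_X|_{U_i} \xrightarrow{\ \sim\ } \mathcal{L}|_{U_i}$; then on $U_i \cap U_j$ the composite $\phi_i^{-1}\phi_j$ is an automorphism of $\mathcal{O}_X$, hence multiplication by $g_{ij} \in \mathcal{O}_X^\times(U_i \cap U_j)$, and the $g_{ij}$ satisfy $g_{ij}g_{jk} = g_{ik}$ on triple overlaps. A change of the $\phi_i$ modifies $(g_{ij})$ by a coboundary and a refinement of the cover refines the cocycle, so $c(\mathcal{L})$ is well defined and depends only on the isomorphism class of $\mathcal{L}$. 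For surjectivity of $c$, given a cocycle $(g_{ij})$ one glues the sheaves $\mathcal{O}_X|_{U_i}$ along the isomorphisms ``multiplication by $g_{ij}$'' — the cocycle identity is exactly the descent datum — producing a sheaf of $\mathcal{O}_X$-semimodules trivial on each $U_i$, i.e. an invertible sheaf with $c$-class $[(g_{ij})]$. For injectivity, if $c(\mathcal{L}) = 0$ then after refining we may write $g_{ij} = h_i h_j^{-1}$; replacing each $\phi_i$ by $\phi_i \circ (\cdot\, h_i^{-1})$ makes all transition maps the identity, so the $\phi_i$ glue to a global isomorphism $\mathcal{O}_X \xrightarrow{\ \sim\ } \mathcal{L}$.

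It remains to handle the group structure and the passage to derived cohomology. Locally, $\mathcal{L}_1 \otimes_{\mathcal{O}_X} \mathcal{L}_2$ and $\mathcal{L}^{-1} := \mathcal{H}om_{\mathcal{O}_X}(\mathcal{L}, \mathcal{O}_X)$ are $\mathcal{O}_X \otimes_{\mathcal{O}_X} \mathcal{O}_X \cong \mathcal{O}_X$ and $\mathcal{H}om_{\mathcal{O}_X}(\mathcal{O}_X, \mathcal{O}_X) \cong \mathcal{O}_X$ respectively, hence invertible; the evaluation map $\mathcal{L} \otimes_{\mathcal{O}_X} \mathcal{L}^{-1} \to \mathcal{O}_X$ is locally the multiplication $\mathcal{O}_X \otimes_{\mathcal{O}_X} \mathcal{O}_X \to \mathcal{O}_X$, so it is an isomorphism and $\Pic(X)$ is a group. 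Reading off transition cocycles shows $c(\mathcal{L}_1 \otimes \mathcal{L}_2) = c(\mathcal{L}_1) + c(\mathcal{L}_2)$, so $c$ is a group isomorphism $\Pic(X) \xrightarrow{\ \sim\ } \cHH^1(X, \mathcal{O}_X^\times)$. Since $\mathcal{O}_X^\times$ is a sheaf of abelian groups on a topological space, $\cHH^1(X, \mathcal{O}_X^\times) \cong \HH^1(X, \mathcal{O}_X^\times)$ (\v{C}ech and derived cohomology agree in degrees $0$ and $1$ for an arbitrary space), giving $\Pic(X) \cong \HH^1(X, \mathcal{O}_X^\times)$. I expect the main — indeed only — subtlety to be the first step, specifically the observation that the set-theoretic inverse of an $\mathcal{O}_X$-linear bijection is again $\mathcal{O}_X$-linear, together with the elementary facts $\mathcal{O}_X \otimes_{\mathcal{O}_X} \mathcal{O}_X \cong \mathcal{O}_X \cong \mathcal{H}om_{\mathcal{O}_X}(\mathcal{O}_X, \mathcal{O}_X)$; the descent and \v{C}ech-to-derived comparison arguments are formal and do not distinguish semirings from rings.
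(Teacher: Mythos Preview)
Your argument is correct and is precisely the classical identification of $\Pic$ with $\cHH^1(X,\mathcal{O}_X^\times)$, carried out with the single extra observation (your first step) that the set-theoretic inverse of an $\mathcal{O}_X$-linear bijection of semimodules is again $\mathcal{O}_X$-linear, which you verify by applying the injective forward map to both sides of each identity; nothing else in the cocycle/gluing/\v{C}ech-to-derived chain uses subtraction, so it transports to semirings unchanged.

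As for comparison with the paper: the paper does \emph{not} prove this theorem. It is quoted as a result of \cite{jun2015cech}, and the only accompanying remark is that $\mathcal{O}_X^\times$ is a sheaf of abelian groups, so $\HH^1$ is defined in the ordinary way and agrees with $\cHH^1$. Earlier, in the monoid-scheme setting, the paper likewise says only that ``one can use the classical argument'' and refers to \cite[Lemma~5.3]{flores2014picard}. Your write-up is therefore a fleshed-out version of exactly the argument the paper is alluding to, not a different route.
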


We note that $\mathcal{O}_X^{\times}$ is a sheaf of abelian groups and thus we can define $\HH^1(X,\mathcal{O}_X^{\times})$ in the usual way and $\HH^1(X,\mathcal{O}_X^{\times})=\cHH^1(X,\mathcal{O}_X^{\times})$ as in Equation~\eqref{equation: pic=derived=cech}.  

\begin{myeg}
In \cite{jun2015cech}, the author also proves that for a projective space $\mathbb{P}^n_S$ over an idempotent semifield $S$, one obtains that $\Pic(\mathbb{P}^n_S)\simeq \mathbb{Z}$, as in the classical case. In \cite{flores2014picard}, a similar result is proven for a projective space over a monoid. These two results motivate (among others) the authors of the current note to study relations among Picard groups of schemes, monoid schemes, and semiring schemes under ``scalar extensions". 
\end{myeg}

\section{Picard groups of tropical toric schemes}
In this section, we prove the main result which states that the Picard group $\Pic(X)$ of an irreducible monoid scheme $X$ (with some mild conditions) is stable under scalar extension to an idempotent semifield. Let us first recall the definition of scalar extension of a monoid scheme to a field or an idempotent semifield.\\

Let $X$ be a monoid scheme and $K$ a field or an idempotent semifield. Suppose that $X$ is affine, i.e., $X=\Spec M$, for some monoid $M$. Then \emph{the scalar extension} is defined as: 
\[
X_K=X\times_{\Spec\mathbb{F}_1}\Spec K=\Spec K[M],
\] 
where $K[M]$ is a monoid semiring (when $K$ is an idempotent semifield) or a monoid ring (when $K$ is a field). This construction can be globalized to define the base change functors from monoid schemes to semiring schemes (or schemes). For the base change functors from monoid schemes to schemes, see \cite{Deitmar} and the case for semiring schemes, see \cite[\S 3]{oliver1} or \cite[\S 3.2.]{giansiracusa2016equations}. In our case, we will use the following lemma:

\begin{lem}\label{lem: open embedding}
	Let $X$ be an irreducible monoid scheme, $U$ be an open subset of $X$, and $K$ be an idempotent semifield. Then \[
	U_K=U \times_{\Spec\mathbb{F}_1} \Spec K
	\]
	is an open subset of $X_K$. 
\end{lem}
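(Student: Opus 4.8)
The plan is to reduce the global statement to the affine case and then invoke the behavior of $\Spec$ on monoid rings/semirings. First I would reduce to $X$ affine: since $X$ is covered by affine opens $\{\Spec M_i\}$, and the formation of $(-)_K$ is local (the base change functor is built by gluing the affine constructions $\Spec M_i \mapsto \Spec K[M_i]$, as recalled just above from \cite{Deitmar} and \cite[\S3.2]{giansiracusa2016equations}), it suffices to show: if $V \subseteq \Spec M$ is open, then $V_K$ is open in $(\Spec M)_K = \Spec K[M]$, compatibly with the gluing. Every open $V$ of $\Spec M$ is a union of basic opens $D(f)$, $f \in M$, so by a further reduction I would treat $V = D(f)$.

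Next, the key point is that $D(f)_K$ is naturally identified with the basic open $D(f) \subseteq \Spec K[M]$, where now $f$ is regarded as an element of $K[M]$ via the monomial $1\cdot f$. Concretely, $D(f) = \Spec M_f$ as a monoid scheme (localization of $M$ at the multiplicative set $\{1,f,f^2,\dots\}$), and base change commutes with localization: $K[M_f] = K[M]_f$, since inverting the monomial $f$ in the monoid ring/semiring is the same as first inverting $f$ in $M$ and then forming the monoid ring/semiring. Hence $D(f)_K = \Spec K[M_f] = \Spec K[M]_f = D(f) \subseteq \Spec K[M]$, which is a basic open, in particular open. Then $V_K = \bigl(\bigcup_i D(f_i)\bigr)_K = \bigcup_i D(f_i)_K$, a union of opens, hence open in $X_K$; the last equality uses that base change preserves the open-cover relations, i.e., the identifications $D(f_i)_K \hookrightarrow (\Spec M)_K$ glue along $D(f_if_j)_K$, which follows from the same localization compatibility applied to $f_if_j$.

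Finally I would globalize: for general irreducible $X$ with affine cover $\{\Spec M_i\}$ and $U \subseteq X$ open, write $U_i := U \cap \Spec M_i$, an open of the affine monoid scheme $\Spec M_i$; by the affine case $(U_i)_K$ is open in $(\Spec M_i)_K = \Spec K[M_i]$, and these are precisely the members of the standard affine cover of $X_K$. Since $U_K = \bigcup_i (U_i)_K$ inside $X_K$ (again because base change is defined by gluing and commutes with the restriction maps of the cover), $U_K$ is a union of opens and hence open. I expect the main obstacle to be purely bookkeeping: verifying that the base change functor really is compatible with the gluing data of the cover — i.e., that the open immersions $D(f_i) \hookrightarrow X$ are carried to open immersions $D(f_i)_K \hookrightarrow X_K$ and that these agree on overlaps. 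This is where irreducibility of $X$ is convenient (the generic point lies in every nonempty affine open, so the cover is well-behaved and $K[M]$ inherits an integral/cancellative flavor making the localizations transparent), and where one must lean on the cited constructions of the base change functor rather than re-prove them.
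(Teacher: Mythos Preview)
Your proposal is correct and follows essentially the same approach as the paper: reduce to the affine case, then to a basic open $D(f)$, and identify $D(f)_K$ with the basic open $D(1\cdot f)\subseteq \Spec K[M]$ via the localization identity $K[M_f]=K[M]_f$. The paper's proof is simply a terser version of yours, eliding the globalization bookkeeping with ``we may assume $X$ is affine'' and recording only the key identification $U_K=D(f)\subseteq\Spec K[M]$; your additional comments on gluing are sound but not strictly needed, and the role you assign to irreducibility is somewhat speculative---the argument for the lemma itself does not really depend on it.
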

\begin{proof}
	We may assume that $X$ is affine, say $X=\Spec M$, and $U=D(f)$ for some $f \in M$. Then one can easily check that
	\[
	U_K=\{\mathfrak{p} \in\Spec K[M]\mid 1_M\cdot f \not\in \mathfrak{p}\}. 
	\]
	In other words, $U_K$ is nothing but $D(f)$ when $f$ is considered as an element of $K[M]$. 
\end{proof}

\begin{rmk}
In fact, the scalar extensions can be defined for any semiring $S$ by considering the monoid semiring $S[M]$ for a given monoid $M$. But in this paper, we only consider the case when $S$ is an idempotent semifield. 
\end{rmk}
\begin{rmk}\label{rmk: toricDeitmar}
In \cite{cortinas2015toric}, G.~Corti\~{n}as, C.~Haesemeyer, M.~Walker, and C.~Weibel prove that from a toric monoid scheme $X$ (a monoid scheme of finite type which is separated, connected, torsion-free, and normal), after scalar extension to a field $k$, one obtains a toric variety $X_k$. Conversely, the authors also show that from a toric variety $X_k$ associated to a fan $\Delta$, one can always construct a toric monoid scheme $X=X(\Delta)$ (from a fan $\Delta)$ in such a way that $X_k$ is the scalar extension of $X$ to $k$. 
Moreover, they show that there exists a faithful functor from fans to a toric monoid schemes (cf. Theorem~4.4 \cite{cortinas2015toric}).
\end{rmk}

Since the Picard group $\Pic(X)$ of a semiring scheme $X$ can be computed by the cohomology group with values in the sheaf $\mathcal{O}_X^{\times}$, we first need to understand $K[M]^\times$, the group of multiplicatively invertible elements of $K[M]$. \\

Recall that by a \emph{cancellative monoid}, we mean a monoid $M$ such that if $ab=ac$, for $a \in M-\{0_M\}$ and $b,c \in M$, then $b=c$. We insist on using this terminology, since passing to semirings integral (having no zero divisors, i.e., $ab=0$ implies either $a=0$ or $b=0$) and cancellative are two different notions. For example, the semiring of tropical polynomials does not contain zero divisors but is not cancellative (cf. Remark~\ref{rem: integral}).

\begin{prop}\label{KMunits}
Let $M$ be a cancellative monoid and $K$ be an idempotent semifield.  Then we have 
\[
K[M]^\times\cong K^\times \times M^\times.
\]
\end{prop}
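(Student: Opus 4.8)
The plan is to verify the two inclusions underlying the isomorphism separately, where the homomorphism in question sends $(a,u) \in K^\times \times M^\times$ to the element $a[u]$ of $K[M]$ (here $[m]$ denotes the basis vector attached to $m \in M$). One direction is immediate: $a[u] \cdot \big(a^{-1}[u^{-1}]\big) = (aa^{-1})[uu^{-1}] = [1_M]$, so $a[u]$ is invertible, and $(a,u) \mapsto a[u]$ is visibly an injective group homomorphism $K^\times \times M^\times \to K[M]^\times$. The real content is surjectivity, i.e.\ that every unit of $K[M]$ has this shape.

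The first step is to record the behaviour of supports under multiplication. Write each $f \in K[M]$ as a finite sum $f = \sum_m a_m[m]$ with $a_m \in K$, and recall that $[0_M]$ is identified with $0$ in $K[M]$, so one may take the \emph{support} $\mathrm{supp}(f) = \{m \in M\setminus\{0_M\} : a_m \neq 0\}$. I would then prove the claim that $\mathrm{supp}(fg) = \mathrm{supp}(f)\cdot\mathrm{supp}(g) := \{mn : m \in \mathrm{supp}(f),\, n \in \mathrm{supp}(g)\}$ for all $f, g$. This rests on three facts: (i) since $M$ is cancellative, $mn \neq 0_M$ whenever $m,n \neq 0_M$ (from $a\cdot 0_M = 0_M$ and cancellation), so products of support elements stay in $M\setminus\{0_M\}$ and genuinely contribute; (ii) since $K$ is a semifield it has no zero divisors, so each term $a_m b_n$ occurring in a coefficient of $fg$ is nonzero; and (iii) since $K$ is idempotent, a finite sum of nonzero elements is nonzero (in the natural order $x \le y \Leftrightarrow x+y = y$ one has $0 \le a \le a+b$, so $a+b = 0$ forces $a = 0$). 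Consequently the coefficient of $[k]$ in $fg$, namely $\sum_{m\in\mathrm{supp}(f),\,n\in\mathrm{supp}(g),\,mn=k} a_m b_n$, is nonzero exactly when $k \in \mathrm{supp}(f)\cdot\mathrm{supp}(g)$.

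Now suppose $fg = [1_M]$ (here $1_M \neq 0_M$, since $K[M]$ has at least two elements). Then $f, g \neq 0$, so $A := \mathrm{supp}(f)$ and $B := \mathrm{supp}(g)$ are nonempty, and the support identity gives $A\cdot B = \mathrm{supp}([1_M]) = \{1_M\}$; that is, $mn = 1_M$ for every $m \in A$ and $n \in B$. Fixing $n_0 \in B$, for any $m_1, m_2 \in A$ we get $m_1 n_0 = 1_M = m_2 n_0$, and cancelling the nonzero element $n_0$ yields $m_1 = m_2$. Hence $A = \{u\}$ is a singleton, and symmetrically $B = \{v\}$ with $uv = 1_M$, so $u \in M^\times$ and $v = u^{-1}$. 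Therefore $f = a[u]$ with a single coefficient $a \in K$, and writing $g = b[u^{-1}]$ the equation $[1_M] = fg = (ab)[1_M]$ forces $ab = 1_K$, so $a \in K^\times$. Thus $f$ is the image of $(a,u) \in K^\times\times M^\times$, proving surjectivity and hence the proposition.

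The argument is short, and essentially everything is concentrated in the support identity $\mathrm{supp}(fg) = \mathrm{supp}(f)\cdot\mathrm{supp}(g)$: this is the one place where all three hypotheses are genuinely used, with idempotency and the semifield property of $K$ ensuring that no coefficient can vanish through cancellation or a zero divisor, and cancellativity of $M$ ensuring that no support element gets absorbed into $0_M$ (which is $0$ in $K[M]$). This is precisely what trivializes the computation of units here, in contrast to the classical case of group rings over a field. I would isolate that identity as a displayed claim and then deduce the proposition from it.
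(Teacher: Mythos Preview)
Your proof is correct. Both your argument and the paper's reduce to showing that a unit of $K[M]$ must be a monomial $a[u]$, and both exploit the same three facts: cancellativity of $M$, the absence of zero divisors in the semifield $K$, and the idempotency of $K$ forcing finite sums of nonzero elements to be nonzero.

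The packaging differs. The paper introduces a length function $\phi(x)$ (the minimal number of monomials needed to write $x$), shows $\phi(x)\le\phi(x+y)$ and $\phi(mx)=\phi(x)$ for $m\in M\setminus\{0_M\}$, and then uses idempotency in the form $y=a_1m_1+y$ to deduce $\phi(xy)\ge\phi(a_1m_1x)=\phi(x)$; applying this to $xy=1$ forces $\phi(x)\le 1$. You instead prove the sharper identity $\mathrm{supp}(fg)=\mathrm{supp}(f)\cdot\mathrm{supp}(g)$ directly, and then read off from $\mathrm{supp}(f)\cdot\mathrm{supp}(g)=\{1_M\}$ that both supports are singletons consisting of mutual inverses. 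Your support identity is a strictly stronger statement than the paper's length inequality (indeed $\phi(x)=|\mathrm{supp}(x)|$, and your identity immediately gives $\phi(xy)\ge\phi(x)$), and it makes the endgame a one-line set-theoretic observation rather than a chain of inequalities. The cost is the same: both arguments need all three hypotheses exactly once, at the point where one checks that the relevant coefficient (your $\sum a_mb_n$) cannot vanish.
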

\begin{proof}
For $x\in K[M]$, let $\phi(x)\in\mathbb{N}$ be the smallest natural number such that $x$ has a representation of the form 
\[
x=\displaystyle\sum_{k=1}^{\phi(x)}a_km_k, 
\]
where $a_k\in K$ and $m_k\in M$.  Such a representation has a minimal length if and only if each $a_k\neq 0$ and the elements $m_k$ are nonzero and distinct. Since $K$ is an idempotent semifield, two nonzero elements of $K$ cannot sum to zero and hence we have $\phi(x)\leq \phi(x+y)$ for all $x,y\in K[M]$.\\
Let $m\in M-\{0_M\}$ and $x\in K[M]$.  Write $x=\displaystyle\sum_{k=1}^{\phi(x)}a_km_k$, so we have that $mx=\displaystyle\sum_{k=1}^{\phi(x)}a_k(mm_k)$.  This is the shortest such expression for $mx$ because each $a_k\neq 0$ and $M$ is cancellative. Hence one has $\phi(mx)=\phi(x)$.  One also has $\phi(ax)=\phi(x)$ for $a\in K^\times$.\\
Now let $y\in K[M]$ be nonzero and $x\in K[M]$ be arbitrary.  Write $y=\displaystyle\sum_{k=1}^{\phi(y)}a_km_k$ with $a_k \neq 0$ for all $k=1,\dots,\phi(y)$. In particular, $a_1\in K^\times$ since $K$ is a semifield. Also, since $K$ is idempotent, we have that $y=a_1m_1+y$ and hence
\[ 
\phi(xy)=\phi(xy+a_1m_1x)\geq\phi(a_1m_1x)=\phi(x).
\]

Finally suppose $x\in K[M]^\times$ and let $y=x^{-1}$.  Then we have
\[
1=\phi(1)=\phi(xy)\geq \phi(x).
\]
Since $\phi(x)\leq 1$, we have $x=am$ for
$a\in K$ and $m\in M$.  Similarly $y=bm'$ for $b\in K$ and $m'\in M$.  One easily sees that $ab=1$ and $mm'=1$, and hence $x\in K^\times\times M^\times$.
\end{proof}

We can compute \v{C}ech cohomology given the existence of an appropriate open cover. The following result provides a link between $\Pic (X_K)$ and \v{C}ech cohomology of the sheaf $\mathcal{K}^\times \times {\mathcal{O}_X}^\times$ (of abelian groups) on $X$, where $\mathcal{K}^\times$ is the constant sheaf on $X$ associated to the abelian group $K^\times$. 


In what follows, we will assume the following condition for an irreducible monoid scheme. 

\begin{cond}\label{condition: condition for open cover}
Let $X$ be an irreducible monoid scheme. Suppose that $X$ has an open affine cover $\mathcal{U}=\{U_\alpha\}$ such that any finite intersection of the sets $U_\alpha$ is isomorphic to the prime spectrum of a cancellative monoid. 
\end{cond}

\begin{rmk}
A monoid scheme $X$ is said to be separated if the diagonal map $\Delta:X \to X\times_{\Spec \mathbb{F}_1} X$ is a closed immersion. Note that the definition of a closed immersion slightly differs from the case of schemes. See, \cite[Definition 2.5., Remark 2.7.1., and $\S 4$]{cortinas2015toric} for more details. The above condition is not equivalent to the separatedness of monoid schemes. Following \cite[Corollary 3.8.]{cortinas2015toric}, for any separated monoid scheme $X$ and two affine open subschemes $U_1$ and $U_2$, the intersection $U_1 \cap U_2$ should be affine. But our condition insists further that such intersection should be given by a cancellative monoid. 
\end{rmk}

\begin{rmk}
Condition \ref{condition: condition for open cover} can be weakened as follows: one may only require the existence of an affine open cover $\mathcal{U}=\{U_\alpha\}$ such that any finite intersection of the sets $U_\alpha$ is isomorphic to a \emph{union} of prime spectra of cancellative monoids. But it helps to avoid a digression into technicalities, which would make the paper hard to read.
\end{rmk}

Let $X$ be an irreducible monoid scheme and $\mathcal{U}=\{U_i\}$ be an open cover of $X$. Let
\[
\mathcal{U}_K=\{U_i\times_{\Spec \mathbb{F}_1}\Spec K\}. 
\]
It follows from Lemma \ref{lem: open embedding} that $\mathcal{U}_K$ is an open cover of $X_K$ for an idempotent semifield $K$. \\

Now, under the assumption of Condition \ref{condition: condition for open cover}, we have the following. 

\begin{thm}\label{almostmainthm}
Let $X$ be an irreducible monoid scheme and $\mathcal{U}$ be an open cover of $X$ satisfying Condition \ref{condition: condition for open cover}. Let $K$ be an idempotent semifield, $X_K=X\times_{\Spec\mathbb{F}_1}\Spec K$, and  $\mathcal{U}_K=\{U_\alpha\times_{\Spec\mathbb{F}_1}\Spec K\}$. Then we have 
\[
\HH^1(\mathcal{U}_K,\mathcal{O}_{X_K}^\times)\cong \HH^1(\mathcal{U},\K\times\mathcal{O}_X^\times).
\]
\end{thm}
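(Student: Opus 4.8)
The plan is to compute both Čech $\HH^1$ groups from the same combinatorial data — the nerve of the cover $\mathcal{U}$ — by identifying the relevant sheaves of units on the intersections. First I would fix notation: write $U_{i_0\cdots i_p} = U_{i_0}\cap\cdots\cap U_{i_p}$, and note that by Condition~\ref{condition: condition for open cover} each such finite intersection is $\Spec M_{i_0\cdots i_p}$ for a cancellative monoid $M_{i_0\cdots i_p}$; by Lemma~\ref{lem: open embedding} the intersection $(U_{i_0\cdots i_p})_K$ is the corresponding open subset of $X_K$, equal to $\Spec K[M_{i_0\cdots i_p}]$. The key computational input is Proposition~\ref{KMunits}: for a cancellative monoid $M$ and an idempotent semifield $K$ one has $K[M]^\times \cong K^\times\times M^\times$. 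Globalizing over the intersection $\Spec M_{i_0\cdots i_p}$, which is irreducible (being irreducible as a topological space, inherited from $X$), the sheaf $\mathcal{O}_{X_K}^\times$ evaluated on this intersection should give exactly $K^\times\times M_{i_0\cdots i_p}^\times = K^\times \times \mathcal{O}_X^\times(U_{i_0\cdots i_p})$, since on an irreducible monoid scheme $\mathcal{O}_X^\times$ has sections equal to the units of the relevant monoid.

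Second, once this identification of sections is in place, I would check that it is compatible with restriction maps, i.e. that the assignment $(U_{i_0\cdots i_p})_K \mapsto \mathcal{O}_{X_K}^\times((U_{i_0\cdots i_p})_K)$ and the assignment $U_{i_0\cdots i_p}\mapsto (\K\times\mathcal{O}_X^\times)(U_{i_0\cdots i_p})$ agree as functors on the poset of finite intersections of the $U_\alpha$. This requires: (a) on each piece, $\mathcal{K}^\times(U_{i_0\cdots i_p}) = K^\times$ because $\mathcal{K}^\times$ is the constant sheaf on the irreducible (hence connected) space $X$ restricted to the connected open $U_{i_0\cdots i_p}$; and (b) the restriction map $K[M]^\times \to K[M']^\times$ induced by a monoid homomorphism $M\to M'$ corresponds under the isomorphism of Proposition~\ref{KMunits} to $\mathrm{id}_{K^\times}\times(M^\times\to M'^\times)$ — this is immediate from the explicit form of the isomorphism ($am \mapsto (a,m)$), since the ring homomorphism $K[M]\to K[M']$ sends $am\mapsto a\cdot\bar m$. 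Therefore the two Čech complexes $C^\bullet(\mathcal{U}_K,\mathcal{O}_{X_K}^\times)$ and $C^\bullet(\mathcal{U},\K\times\mathcal{O}_X^\times)$ are literally isomorphic as cochain complexes of abelian groups, term by term and compatibly with the differentials, so their cohomologies agree in every degree — in particular in degree $1$, which is what is asserted.

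I expect the main obstacle to be the bookkeeping in Step one: verifying that $\mathcal{O}_{X_K}^\times$ has the claimed sections on the intersections $(U_{i_0\cdots i_p})_K$, rather than just on a single affine. The point is that Proposition~\ref{KMunits} computes the global units $K[M]^\times$ of the affine semiring scheme $\Spec K[M]$, but an invertible section of $\mathcal{O}_{X_K}^\times$ over $\Spec K[M]$ is the same as a global unit of $K[M]$ exactly when $\Spec K[M]$ is affine — which it is, by Lemma~\ref{lem: open embedding} and the hypothesis on intersections. One should also confirm irreducibility of $\Spec M_{i_0\cdots i_p}$: this follows since an open subscheme of an irreducible scheme is irreducible, and finite intersections of affine opens in $X$ are again such opens; irreducibility is what guarantees the constant sheaf $\mathcal{K}^\times$ contributes the single group $K^\times$ (no product over connected components) and that $\mathcal{O}_X^\times$ on each piece is just the unit group of the coordinate monoid. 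None of these steps is deep, but the statement is essentially a "plumbing" result and the care lies in making the identification of complexes functorially precise; once that is done, the conclusion $\HH^1(\mathcal{U}_K,\mathcal{O}_{X_K}^\times)\cong \HH^1(\mathcal{U},\K\times\mathcal{O}_X^\times)$ drops out immediately.
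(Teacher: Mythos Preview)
Your proposal is correct and follows essentially the same approach as the paper: identify the sections of $\mathcal{O}_{X_K}^\times$ on each finite intersection with $K^\times\times M^\times$ via Proposition~\ref{KMunits}, recognize this as the value of $\mathcal{K}^\times\times\mathcal{O}_X^\times$ on the corresponding intersection (using irreducibility for the constant sheaf), and then check compatibility with restrictions to obtain an isomorphism of \v{C}ech complexes. If anything, your write-up is slightly more explicit than the paper's in verifying the naturality of the isomorphism $K[M]^\times\cong K^\times\times M^\times$ under monoid maps, and you correctly note that the argument actually yields an isomorphism in all degrees, not just in degree~$1$.
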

\begin{proof}
Let $U_\alpha^K=U_\alpha\times_{\Spec\mathbb{F}_1}\Spec K$.  Write $U_{\alpha_1\ldots\alpha_n}=U_{\alpha_1}\cap\ldots\cap U_{\alpha_n}$, and similarly for $U_{\alpha_1\ldots\alpha_n}^K$.  Let $\mathcal{F}=\K\times\mathcal{O}_X^\times$, and $\mathcal{G}=\mathcal{O}_{X_K}^\times$.  Note that by irreducibility of $X$, we have $\mathcal{F}(U)=K^\times \times (\mathcal{O}_X(U)^\times)$ for any open subset $U\subseteq X$.\\
Fix $\alpha_1,\ldots,\alpha_n$.  Then we have $U_{\alpha_1\ldots\alpha_n}\cong \Spec M$ and $U_{\alpha_1\ldots\alpha_n}^K\cong \Spec K[M]$ for some cancellative monoid $M$.  Then $\mathcal{F}(U_{\alpha_1\ldots\alpha_n})=K^\times\times M^\times \cong K[M]^\times=\mathcal{G}(U_{\alpha_1\ldots\alpha_n}^K)$, by Proposition \ref{KMunits}.  These isomorphisms induce isomorphisms of \v{C}ech cochains $\mathbf{C}^k(\mathcal{U},\mathcal{F})\cong\mathbf{C}^k(\mathcal{U}_K,\mathcal{G})$.  The result will follow if these isomorphisms are compatible with the differentials.  However, this reduces to checking that the maps $\mathcal{F}(U_{\alpha_1\ldots\alpha_n})\rightarrow\mathcal{F}(U_{\alpha_1\ldots\alpha_n\alpha_{n+1}})\rightarrow\mathcal{G}(U_{\alpha_1\ldots\alpha_n\alpha_{n+1}}^K)$ and $\mathcal{F}(U_{\alpha_1\ldots\alpha_n})\rightarrow\mathcal{G}(U_{\alpha_1\ldots\alpha_n}^K)\rightarrow \mathcal{G}(U_{\alpha_1\ldots\alpha_n\alpha_{n+1}}^K)$ agree, which is readily checked.
\end{proof}

Our next goal is to show that $\Pic X_K$ can be computed by using a cover that satisfies the assumption of Condition \ref{condition: condition for open cover}.

\begin{lem}\label{lem: idempotent semifield}
Let $K$ be an idempotent semifield. Suppose that $a,b \in K$ and $a \neq 0$ or $b \neq 0$, then $a+b \neq 0$. 
\end{lem}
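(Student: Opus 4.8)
The plan is to argue by contradiction, and I expect the only ingredient actually needed to be additive idempotency; the full semifield hypothesis is not essential for this particular statement. Since the conclusion is symmetric in $a$ and $b$, I would assume without loss of generality that $a \neq 0_K$, and suppose toward a contradiction that $a + b = 0_K$.

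The key identity is then
\[
a = a + 0_K = a + (a+b) = (a+a) + b = a + b = 0_K,
\]
where the middle equalities use associativity of addition and the idempotency relation $a+a = a$. This contradicts $a \neq 0_K$, and hence $a + b \neq 0_K$. I do not anticipate any real obstacle here: the whole point is to rewrite $a$ as $a + (a+b)$ and absorb the repeated summand via idempotency, so the argument is essentially a one-line computation once the case split on which of $a,b$ is nonzero is made.

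If one instead prefers to make visible use of the semifield structure, an alternative route (which I would only mention, not develop) is: multiplying $a + b = 0_K$ by $a^{-1}$ gives $1_K + a^{-1}b = 0_K$; adding $1_K$ to both sides and invoking $1_K + 1_K = 1_K$ yields $1_K + a^{-1}b = 1_K$, hence $1_K = 0_K$, contradicting the standing assumption that a semiring has at least two distinct elements. Either way the proof is immediate, so there is no substantive difficulty to flag.
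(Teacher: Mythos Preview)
Your proof is correct and uses essentially the same idea as the paper: from $a+b=0$ one derives $a = a + (a+b) = a+b = 0$ via idempotency. The only cosmetic difference is that the paper does not invoke symmetry but instead repeats the computation to get $b = a+b$ as well, concluding $a=b=0$; your WLOG reduction is a slight streamlining of the same argument.
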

\begin{proof}
Suppose that $a+b=0$. Since $K$ is idempotent we have 
\[
a+b=a+a+b=a+(a+b)=a+0=a.
\]
Similarly, we obtain that $a+b=b$. It follows that $a=b$ and hence $a+b=a=b$ as $K$ is idempotent. This implies that $a=b=0$, which contradicts the initial assumption. 
\end{proof}

\begin{lem}\label{coverofKM}
Let $K$ be an idempotent semifield and $M$ be a cancellative monoid. Let $\{U_\alpha\}$ be an open cover of $X=\Spec  K[M]$.  Then $U_\alpha=X$ for some $\alpha$.
\end{lem}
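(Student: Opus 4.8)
The plan is to show that $X=\Spec K[M]$ has a point whose only open neighborhood is $X$ itself; the statement then follows at once, because a cover $\{U_\alpha\}$ must have a member containing that point, and such a member is forced to equal $X$. The distinguished point will be $\mathfrak m:=K[M]\setminus K[M]^\times$, the set of non-units, so the heart of the proof is to verify that $\mathfrak m$ is an ideal, i.e.\ that $K[M]$ is local.

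The easy parts I would dispatch first. The set $\mathfrak m$ is proper: it contains $0$, which is not a unit, but not $1=1_K1_M$. It is closed under multiplication by elements of $K[M]$, since if $rx$ is a unit for some $x\in\mathfrak m$ then $x$ divides a unit and is therefore a unit. Granting additive closure as well, every proper ideal of $K[M]$ is contained in $\mathfrak m$ (a proper ideal contains no unit), so $\mathfrak m$ is the unique maximal ideal; moreover $\mathfrak m$ is prime, because if $fg$ is a non-unit then $f$ or $g$ is a non-unit. Thus $\mathfrak m$ is a point of $X$.

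The main obstacle is closure of $\mathfrak m$ under addition: a sum of two non-units of $K[M]$ must again be a non-unit. Here Proposition~\ref{KMunits} and Lemma~\ref{lem: idempotent semifield} come in. Suppose $x,y\in\mathfrak m$ but $z:=x+y$ is a unit. By Proposition~\ref{KMunits}, $z=cm$ with $c\in K^\times$ and $m\in M^\times$; in particular $z\neq 0$ and its reduced form is the single monomial $cm$. Write $x$ and $y$ in reduced form as in the proof of Proposition~\ref{KMunits}. Lemma~\ref{lem: idempotent semifield} says that a pair of elements of $K$ not both zero has nonzero sum, so no coefficients cancel when adding $x$ and $y$, and hence the support of $z$ is the union of the supports of $x$ and $y$. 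Since that support equals $\{m\}$, both $x$ and $y$ are supported at $m$, say $x=am$ and $y=bm$ with $a,b\in K$. But a nonzero element of the form $am$ with $m\in M^\times$ is a unit (by Proposition~\ref{KMunits}, since every nonzero element of the semifield $K$ is invertible), so non-unitality of $x$ and $y$ forces $a=b=0$; then $z=x+y=0$ is not a unit --- a contradiction. Hence $\mathfrak m$ is an ideal.

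To finish, let $\{U_\alpha\}$ be an open cover of $X$ and pick $\alpha$ with $\mathfrak m\in U_\alpha$. Since the basic opens $D(f)$ form a basis of the Zariski topology, there is $f\in K[M]$ with $\mathfrak m\in D(f)\subseteq U_\alpha$. Then $f\notin\mathfrak m$, so $f$ is a unit; a unit lies in no prime ideal, whence $D(f)=X$ and therefore $U_\alpha=X$. I expect everything here to be routine except the no-cancellation argument showing that $\mathfrak m$ is additively closed, which is the one place the hypotheses (cancellativity of $M$, idempotency of $K$) are really used.
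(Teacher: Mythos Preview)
Your proof is correct and follows essentially the same route as the paper: both arguments show that $K[M]$ is local with unique maximal ideal $\mathfrak m=K[M]\setminus K[M]^\times$, and then observe that any open set containing $\mathfrak m$ must be all of $X$. The only difference is that the paper asserts that $\mathfrak m$ is an ideal ``directly from the proof of Proposition~\ref{KMunits}'' without further comment, whereas you spell out the additive closure via Lemma~\ref{lem: idempotent semifield}; your final step using a basic open $D(f)$ is a slightly cleaner version of the paper's $V(I)^c$ argument.
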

\begin{proof}
We may assume without loss of generality that each open set in the cover is nonempty. One can easily observe that $X$ contains a unique maximal ideal, namely, $\mathfrak{m}:=K[M] - K[M]^\times$; this directly follows from the proof of Proposition \ref{KMunits}. Since $\{U_\alpha\}$ is an open covering of $X$, $\mathfrak{m} \in U_\alpha$ for some $\alpha$. We may further assume that $U_\alpha$ is an affine open subset of $X$, say $U_\alpha=V(I)^c$ for some ideal $I$ of $K[M]$. In particular, we have that $I \not \subseteq \mathfrak{m}$. Now, let $\mathfrak{p}$ be any prime ideal of $K[M]$. Since $\mathfrak{m}$ is a unique maximal ideal, we have that $\mathfrak{p} \subseteq \mathfrak{m}$ and hence $I \not \subseteq \mathfrak{p}$, showing that $X=U_\alpha$.



\end{proof}

\begin{rmk}
A special case of Lemma \ref{coverofKM}, when $M$ is a free monoid generated by $n$ elements, is proved in \cite[Lemma 4.20]{jun2015cech} and referred to as a ``tropical partition of unity''. In fact, Lemma \ref{coverofKM} can be also proven by using the fact that $K[M]$ has the unique maximal ideal, namely $\mathfrak{m}=K[M]-K[M]^\times$ and apply the same argument as in Lemma \ref{lemma: monoidcovering}. Furthermore, as long as a semifield $K$ satisfies the condition in Lemma \ref{lem: idempotent semifield}, one can easily prove that Lemma \ref{coverofKM} holds. 
\end{rmk}

\begin{thm}\label{thm: affine vanishing}
Let $K$ be an idempotent semifield, $M$ be a cancellative monoid, and $X=\Spec K[M]$. Then $\Pic (X)=0$, and more generally, we have
\[
\HH^k(X,\mathcal{O}_{X}^\times)=0,\quad \textrm{for } k\geq 1.
 \]
\end{thm}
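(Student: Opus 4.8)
The statement to prove is that for $K$ an idempotent semifield, $M$ a cancellative monoid, and $X = \Spec K[M]$, one has $\HH^k(X, \mathcal{O}_X^\times) = 0$ for all $k \geq 1$ (in particular $\Pic(X) = 0$). The key structural input is Lemma \ref{coverofKM}: every open cover of $X$ has a member equal to $X$ itself. This is the engine that will drive everything, so the plan is to exploit it to show that \v{C}ech cohomology with respect to \emph{any} open cover vanishes in positive degrees, and then invoke the comparison between \v{C}ech and sheaf cohomology. Since \v{C}ech and sheaf cohomology agree in degrees $0$ and $1$ on any space, and since $\Pic(X) \simeq \HH^1(X, \mathcal{O}_X^\times)$ by \eqref{equation: pic=derived=cech} (the argument carrying over to semiring schemes as recalled from \cite{jun2015cech}), the $\Pic(X) = 0$ assertion is the $k=1$ case; the full statement requires handling all $k \geq 1$.

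First I would reduce to \v{C}ech cohomology on covers. Given any open cover $\mathcal{U} = \{U_\alpha\}$ of $X$, Lemma \ref{coverofKM} gives some index $\alpha_0$ with $U_{\alpha_0} = X$. A cover possessing a member equal to the whole space is of the type for which \v{C}ech cohomology is trivially acyclic: the standard homotopy-operator argument works. Concretely, for a \v{C}ech $k$-cochain $s \in \mathbf{C}^k(\mathcal{U}, \mathcal{F})$ with $k \geq 1$, define $(hs)_{\alpha_1 \ldots \alpha_k} = s_{\alpha_0 \alpha_1 \ldots \alpha_k}$ (restricted appropriately; this makes sense because $U_{\alpha_0} \cap U_{\alpha_1} \cap \cdots = U_{\alpha_1} \cap \cdots$ since $U_{\alpha_0} = X$), and check the contracting-homotopy identity $\delta h + h \delta = \mathrm{id}$ on cochains of degree $\geq 1$, using additivity of the sheaf of abelian groups $\mathcal{O}_X^\times$. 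This shows $\HH^k(\mathcal{U}, \mathcal{O}_X^\times) = 0$ for all $k \geq 1$ and all covers $\mathcal{U}$. Taking the colimit over refinements, the \v{C}ech cohomology $\cHH^k(X, \mathcal{O}_X^\times)$ vanishes for $k \geq 1$.

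The remaining point is to pass from vanishing of \v{C}ech cohomology to vanishing of sheaf (derived functor) cohomology in all degrees $k \geq 1$. For $k = 0, 1$ this is automatic. For higher $k$ one cannot in general conclude sheaf-cohomology vanishing from \v{C}ech-cohomology vanishing on a single space; however, the cover-theoretic vanishing just established holds for \emph{every} open cover of \emph{every} open subset of $X$ (any open $U \subseteq X$ is again, by the proof of Lemma \ref{coverofKM}, a space all of whose covers contain $U$ — indeed this follows because $X$ has a unique maximal ideal contained in every prime, so $\Spec K[M]$ is "local" in a strong sense; alternatively one notes every nonempty open of $X$ containing $\mathfrak{m}$ is all of $X$, and opens not containing $\mathfrak m$ are handled by the same maximal-ideal argument on their own structure). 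A space with the property that \v{C}ech cohomology of the structure sheaf vanishes on every open subset is one for which the \v{C}ech-to-derived-functor spectral sequence degenerates, giving $\HH^k = \cHH^k = 0$ for $k \geq 1$; this is the classical argument (e.g. Hartshorne Exercise III.4.11, or Godement) that a sheaf which is "flasque for \v{C}ech purposes on all opens" has vanishing higher cohomology. I would state this comparison as the technical backbone and carry it out via the spectral sequence $\cHH^p(X, \underline{\HH}^q(\mathcal{O}_X^\times)) \Rightarrow \HH^{p+q}(X, \mathcal{O}_X^\times)$, inducting on $q$.

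The main obstacle I anticipate is precisely this last passage: justifying that cover-wise \v{C}ech vanishing upgrades to derived-functor vanishing in degrees $\geq 2$. In the classical scheme setting this rests on a genuine theorem (the Cartan–Leray / \v{C}ech-to-derived spectral sequence, or the Mayer–Vietoris induction on a basis), and one must check that the requisite homological machinery — existence of injective resolutions in the category of sheaves of abelian groups on $X$, exactness properties, and the spectral sequence itself — is available here. Since $\mathcal{O}_X^\times$ is an honest sheaf of abelian groups on an ordinary topological space, all of this standard homological algebra applies verbatim (nothing semiring-theoretic enters once we have reduced to the abelian-group-valued sheaf $\mathcal{O}_X^\times$), so the obstacle is one of careful bookkeeping rather than a conceptual gap. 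I would therefore organize the write-up as: (i) the homotopy argument giving $\cHH^{\geq 1}(\mathcal{U}, \mathcal{O}_X^\times) = 0$ from Lemma \ref{coverofKM}; (ii) the observation that this holds on every open subset; (iii) the standard spectral-sequence induction concluding $\HH^{\geq 1}(X, \mathcal{O}_X^\times) = 0$; and (iv) the specialization $k = 1$ recovering $\Pic(X) = 0$ via \eqref{equation: pic=derived=cech}.
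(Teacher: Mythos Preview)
Your proposal is correct and rests on the same engine as the paper's proof---Lemma~\ref{coverofKM}---but you take a slightly longer route. The paper observes directly that the trivial cover $\{X\}$ is cofinal in the directed system of all open covers (every cover refines $\{X\}$ trivially, and by Lemma~\ref{coverofKM} every cover contains $X$ so $\{X\}$ refines every cover); hence the \v{C}ech cohomology colimit is computed on $\{X\}$, which is visibly zero in positive degrees. This replaces your contracting-homotopy argument on each cover with a one-line cofinality remark.

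Your concern about upgrading from \v{C}ech to derived-functor cohomology in degrees $k\geq 2$ is legitimate---the paper does not explicitly address it---but the spectral-sequence induction and the ``same property on every open subset'' step are more machinery than you need (and your justification of step~(ii) for non-principal opens is a bit hand-wavy). A cleaner argument: since the unique maximal ideal $\mathfrak{m}$ has $X$ as its only open neighbourhood, the global-sections functor $\Gamma(X,-)$ on sheaves of abelian groups coincides with the stalk functor at $\mathfrak{m}$, which is exact; hence $\HH^k(X,\mathcal{F})=0$ for all $k\geq 1$ and any abelian sheaf $\mathcal{F}$, with no \v{C}ech comparison needed at all.
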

\begin{proof}

Let $\mathcal{U}=\{U_i\}$ be a covering of $X$ such that every other covering $\mathcal{V}=\{V_i\}$ of $X$ contains $\{U_i\}$ as a refinement. Then by definition of \v{C}ech cohomology $H^k(X,\mathcal F) = H^k(\mathcal{U},\mathcal F)$ for any abelian sheaf $\mathcal F$. By Lemma \ref{coverofKM}, this is the case for the covering $\{U_i\} = \{X\}$. Thus $H^k(X, \mathcal{O}_X^\times) = H^k(\mathcal{U},\mathcal{O}_X^\times)$. Now, one can easily compute $H^k(\mathcal{U},\mathcal{O}_X^\times)$ to prove the theorem.



\end{proof}

\begin{cor}\label{Kacylcic}
Let $X$ be an irreducible monoid scheme and $\mathcal{U}=\{U_\alpha\}$ be an affine open cover satisfying Condition \ref{condition: condition for open cover}. Let $K$ be an idempotent semifield and let $X_K=X\times_{\Spec\mathbb{F}_1}\Spec K$.  Let $\mathcal{U}_K=\{U_\alpha\times_{\Spec\mathbb{F}_1}\Spec K\}$ be a cover of $X_K$ coming from $\mathcal{U}$. Then we have
\[
\HH^1(X_K,\mathcal{O}_{X_K}^\times)=\HH^1(\mathcal{U}_K,\mathcal{O}_{X_K}^\times).
\] 
\end{cor}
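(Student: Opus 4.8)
The plan is to recognize $\mathcal{U}_K$ as an acyclic (Leray) cover of $X_K$ for the abelian sheaf $\mathcal{O}_{X_K}^\times$, and then to apply the standard comparison theorem between \v{C}ech cohomology with respect to such a cover and the cohomology of the space itself. The only nontrivial ingredient will be Theorem \ref{thm: affine vanishing}; everything else is bookkeeping with the \v{C}ech machinery.

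First I would verify acyclicity. Fix indices $\alpha_1,\dots,\alpha_n$ and write $U_{\alpha_1\cdots\alpha_n}=U_{\alpha_1}\cap\cdots\cap U_{\alpha_n}$ and $U_{\alpha_1\cdots\alpha_n}^K=U_{\alpha_1\cdots\alpha_n}\times_{\Spec\mathbb{F}_1}\Spec K$. By Condition \ref{condition: condition for open cover}, $U_{\alpha_1\cdots\alpha_n}\cong\Spec M$ for some cancellative monoid $M$, and hence by Lemma \ref{lem: open embedding} the member $U_{\alpha_1\cdots\alpha_n}^K$ of $\mathcal{U}_K$ is an open subscheme of $X_K$ isomorphic to $\Spec K[M]$, with $\mathcal{O}_{X_K}^\times$ restricting there to $\mathcal{O}_{\Spec K[M]}^\times$. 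Theorem \ref{thm: affine vanishing} then gives $\HH^k(U_{\alpha_1\cdots\alpha_n}^K,\mathcal{O}_{X_K}^\times)=0$ for every $k\geq 1$, so $\mathcal{U}_K$ is indeed acyclic for $\mathcal{O}_{X_K}^\times$.

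Next I would invoke the \v{C}ech-to-derived-functor spectral sequence $E_2^{p,q}=\HH^p(\mathcal{U}_K,\underline{\HH}^q(\mathcal{O}_{X_K}^\times))\Rightarrow\HH^{p+q}(X_K,\mathcal{O}_{X_K}^\times)$, where $\underline{\HH}^q$ is the presheaf $U\mapsto\HH^q(U,\mathcal{O}_{X_K}^\times)$; this exists for any sheaf of abelian groups on any topological space (e.g. via the double complex obtained from a flasque resolution). The acyclicity just established kills every row $q\geq 1$ of the $E_2$-page, so the spectral sequence collapses and yields isomorphisms $\HH^k(\mathcal{U}_K,\mathcal{O}_{X_K}^\times)\cong\HH^k(X_K,\mathcal{O}_{X_K}^\times)$ for all $k$; taking $k=1$ is the assertion. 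For the degree-one statement alone one can bypass the full spectral sequence and use only its five-term exact sequence $0\to\HH^1(\mathcal{U}_K,\mathcal{O}_{X_K}^\times)\to\HH^1(X_K,\mathcal{O}_{X_K}^\times)\to\HH^0(\mathcal{U}_K,\underline{\HH}^1(\mathcal{O}_{X_K}^\times))$, whose last term embeds into $\prod_\alpha\HH^1(U_\alpha^K,\mathcal{O}_{X_K}^\times)=0$; and since $\HH^1$ and $\cHH^1$ agree for sheaves of abelian groups, this gives the desired equality.

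I do not expect a serious obstacle here: the substance of the corollary is entirely contained in Theorem \ref{thm: affine vanishing}. The only points that deserve explicit care are that $\mathcal{O}_{X_K}^\times$ is being treated purely as a sheaf of abelian groups on the underlying topological space of $X_K$, so the cohomological machinery applies verbatim, and that each finite-intersection member of $\mathcal{U}_K$ is genuinely an open subscheme of $X_K$ of the form $\Spec K[M]$ with $M$ cancellative, so that Theorem \ref{thm: affine vanishing} is applicable to it.
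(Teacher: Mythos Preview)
Your proposal is correct and is essentially the paper's own argument. The paper's proof consists of the single sentence ``Combine the previous result with Serre's version of Leray's theorem,'' where ``the previous result'' is Theorem~\ref{thm: affine vanishing}; you have simply unpacked what invoking Leray's theorem means, via the \v{C}ech-to-derived spectral sequence (or its five-term sequence), after verifying that every finite intersection in $\mathcal{U}_K$ is of the form $\Spec K[M]$ with $M$ cancellative so that Theorem~\ref{thm: affine vanishing} applies.
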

\begin{proof}
Combine the previous result with Serre's version (\cite[Th\'{e}or\`{e}me 1 of $n^\circ$ $29$]{serre1955faisceaux}) of Leray's theorem.
\end{proof}

We now consider similar results for the cohomology of $\K\times\mathcal{O}_X^\times$. The following lemma is an analogue of Lemma \ref{coverofKM} for monoids. 

\begin{lem}\label{lemma: monoidcovering}
Let $M$ be a monoid and $\{U_\alpha\}$ be an open cover of $X=\Spec  M$.  Then $U_\alpha=X$ for some $\alpha$.
\end{lem}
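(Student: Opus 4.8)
The plan is to exploit the fact that, as recorded in the Remark following the definition of morphisms of monoid schemes, every monoid $M$ has a \emph{unique} maximal ideal, namely $\mathfrak{m}=M\setminus M^\times$. This is the exact monoid counterpart of the observation used in the proof of Lemma~\ref{coverofKM}, and the argument runs in parallel.

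First I would locate the maximal ideal inside the cover: since $\{U_\alpha\}$ covers $X=\Spec M$, there is some index $\alpha$ with $\mathfrak{m}\in U_\alpha$. Next, because the basic open sets $D(f)$ for $f\in M$ form a basis for the Zariski topology on $\Spec M$, we may choose $f\in M$ with $\mathfrak{m}\in D(f)\subseteq U_\alpha$. The condition $\mathfrak{m}\in D(f)$ means exactly $f\notin\mathfrak{m}=M\setminus M^\times$, i.e. $f\in M^\times$. Finally, a unit cannot lie in any proper ideal, hence in no prime ideal, so $D(f)=\Spec M=X$; since $D(f)\subseteq U_\alpha\subseteq X$, this forces $U_\alpha=X$, as claimed.

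There is essentially no obstacle here: the only point that needs a (trivial) verification is the reduction to a basic open set $D(f)$ contained in $U_\alpha$, together with the standard fact that $D(f)=X$ when $f\in M^\times$. One could alternatively phrase it without passing to a basis, by noting directly that if $\mathfrak p\subseteq\mathfrak m$ for every prime $\mathfrak p$ and $U_\alpha$ is an open neighbourhood of $\mathfrak m$, then any prime lies in the closure of $\{\mathfrak m\}$'s complement only if\ldots{} — but the basic-open argument above is the cleanest, and it is precisely the monoid-scheme shadow of Lemma~\ref{coverofKM}.
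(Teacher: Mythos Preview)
Your proof is correct and follows essentially the same approach as the paper: both arguments hinge on the fact that $M$ has a unique maximal ideal $\mathfrak{m}=M\setminus M^\times$, so whichever $U_\alpha$ contains $\mathfrak{m}$ must be all of $X$. The only cosmetic difference is that the paper (via the parallel argument in Lemma~\ref{coverofKM}) phrases the last step using $U_\alpha=V(I)^c$ with $I\not\subseteq\mathfrak{m}$, whereas you pass to a basic open $D(f)$ with $f\in M^\times$; these are equivalent formulations of the same point.
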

\begin{proof}
One can observe that $M$ has a unique maximal ideal, namely, $\mathfrak{m}=(M-M^\times)$. Now, a similar argument as in Lemma \ref{coverofKM} proves our claim. 
\end{proof}

\begin{cor}
Let $M$ be a cancellative monoid and $X=\Spec M$. Then we have 
\[
\HH^k(X, \K\times \mathcal{O}_X^\times)=0, \quad \textrm{for } k\geq 1.
\]
\end{cor}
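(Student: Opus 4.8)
The plan is to follow the proof of Theorem \ref{thm: affine vanishing} line by line, with the monoid analogue Lemma \ref{lemma: monoidcovering} playing the role of Lemma \ref{coverofKM}. First I would note that, since $M$ is cancellative, $(0_M)$ is a prime ideal contained in every prime of $M$, so $X = \Spec M$ is irreducible; hence for every nonempty open $U \subseteq X$ one has $(\K\times\mathcal{O}_X^\times)(U) = K^\times\times\mathcal{O}_X(U)^\times$, exactly as in the proof of Theorem \ref{almostmainthm}. This only serves to identify the sheaf $\mathcal{F} := \K\times\mathcal{O}_X^\times$ concretely and is not strictly needed for the vanishing itself.

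The main step is the observation, supplied by Lemma \ref{lemma: monoidcovering}, that any open cover $\mathcal{V} = \{V_\alpha\}$ of $X$ already contains $X$ itself as one of its members. Consequently the one-element cover $\{X\}$ refines every cover $\mathcal{V}$ of $X$ (and is trivially refined by each such $\mathcal{V}$), so $\{X\}$ is cofinal in the directed system of open covers of $X$ ordered by refinement. Since $\HH^k(X,\mathcal{F})$ is by definition the filtered colimit of the $\HH^k(\mathcal{V},\mathcal{F})$ along refinements, it follows that $\HH^k(X,\mathcal{F}) = \HH^k(\{X\},\mathcal{F})$ for all $k$, which is precisely the reduction used in Theorem \ref{thm: affine vanishing}.

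Finally I would compute the right-hand side directly: for the one-element cover $\mathcal{U} = \{X\}$ there are no strictly increasing $(k+1)$-tuples of indices once $k \geq 1$, so the \v{C}ech complex $\mathbf{C}^\bullet(\mathcal{U},\mathcal{F})$ equals $\mathcal{F}(X)$ in degree $0$ and $0$ in every positive degree; hence $\HH^k(\{X\},\mathcal{F}) = 0$ for $k \geq 1$. Taking $\mathcal{F} = \K\times\mathcal{O}_X^\times$ yields the corollary. I do not anticipate a genuine obstacle here, as the statement is a direct transcription of Theorem \ref{thm: affine vanishing} to the monoidal setting; the only point deserving a little care is the bookkeeping in the cofinality argument (that $\{X\}$ both refines and is refined by every open cover), together with the choice of \v{C}ech convention --- with either the alternating or the ordered cochain complex the one-element cover is acyclic in positive degrees, so the conclusion is unaffected.
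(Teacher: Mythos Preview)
Your proposal is correct and follows essentially the same approach as the paper, which simply says ``A similar argument to the one of Theorem \ref{thm: affine vanishing} yields the desired result,'' i.e., replace Lemma \ref{coverofKM} by its monoid analogue Lemma \ref{lemma: monoidcovering} and compute \v{C}ech cohomology on the cofinal one-element cover $\{X\}$. Your added remarks on irreducibility and the explicit description of $\K\times\mathcal{O}_X^\times$ are accurate but, as you note, not needed for the vanishing.
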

\begin{proof}
A similar argument to the one of Theorem \ref{thm: affine vanishing} yields the desired result. 
\end{proof}

\begin{cor}\label{F1acyclic}
Let $X$ be an irreducible monoid scheme and $\mathcal{U}=\{U_\alpha\}$ be an affine open cover of $X$ satisfying Condition \ref{condition: condition for open cover}. Then we have 
\[
\HH^1(X,\K\times\mathcal{O}_X^\times)=\HH^1(\mathcal{U},\K\times\mathcal{O}_X^\times).
\]
\end{cor}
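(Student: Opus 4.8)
The plan is to run the same argument as in the proof of Corollary \ref{Kacylcic}, with the semiring-scheme vanishing statement Theorem \ref{thm: affine vanishing} replaced by its monoid-scheme counterpart established in the preceding corollary (the one asserting $\HH^k(\Spec M,\K\times\mathcal{O}^\times)=0$ for $k\geq 1$ and $M$ cancellative). Fix the idempotent semifield $K$, and write $\mathcal{F}=\K\times\mathcal{O}_X^\times$ and $U_{\alpha_1\ldots\alpha_n}=U_{\alpha_1}\cap\cdots\cap U_{\alpha_n}$. By Condition \ref{condition: condition for open cover}, each such finite intersection is isomorphic to $\Spec M$ for some cancellative monoid $M$; since a cancellative monoid has $(0_M)$ as its unique minimal prime, $\Spec M$ is irreducible, so the restriction of the constant sheaf $\K$ to $U_{\alpha_1\ldots\alpha_n}$ is again the constant sheaf attached to $K^\times$. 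Hence $\mathcal{F}|_{U_{\alpha_1\ldots\alpha_n}}$ is precisely the sheaf $\K\times\mathcal{O}_{\Spec M}^\times$ to which the preceding corollary applies.

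First I would invoke that corollary to get $\HH^q(U_{\alpha_1\ldots\alpha_n},\mathcal{F})=0$ for all $q\geq 1$ and all finite intersections; that is, $\mathcal{U}$ is an $\mathcal{F}$-acyclic cover of $X$. Then, exactly as in the proof of Corollary \ref{Kacylcic}, Serre's form of Leray's theorem (\cite[Th\'{e}or\`{e}me 1 of $n^\circ$ $29$]{serre1955faisceaux}) shows that the canonical map $\HH^n(\mathcal{U},\mathcal{F})\to\HH^n(X,\mathcal{F})$ is an isomorphism for all $n$; specializing to $n=1$ gives $\HH^1(X,\K\times\mathcal{O}_X^\times)=\HH^1(\mathcal{U},\K\times\mathcal{O}_X^\times)$, as claimed. (One uses here that for abelian sheaves on the underlying space of a monoid scheme, $\HH^\bullet$ and \v{C}ech cohomology are the usual ones, as recorded earlier in the excerpt, so Serre's topological statement applies verbatim.)

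The argument is essentially formal and I do not expect a genuine obstacle. The one point deserving care is the identification of $\mathcal{F}|_{U_{\alpha_1\ldots\alpha_n}}$ above: one must know that the constant sheaf $\K$ really stays constant on each intersection, which is exactly where irreducibility — equivalently, uniqueness of the minimal prime of a cancellative monoid — is used. Once that is in place, the statement follows by quoting the preceding corollary and Leray's theorem, in complete parallel with Corollary \ref{Kacylcic}.
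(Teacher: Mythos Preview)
Your argument is correct and is exactly the paper's approach: the paper's proof is the single line ``Combine the previous result with Serre's version (\cite[Th\'{e}or\`{e}me 1 of $n^\circ$ $29$]{serre1955faisceaux}) of Leray's theorem,'' and you have simply unpacked this in more detail. Your extra remark that cancellativity of $M$ forces $\Spec M$ to be irreducible (so that $\K$ restricts to the constant sheaf on each intersection) is a helpful sanity check the paper leaves implicit.
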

\begin{proof}
Combine the previous result with Serre's version (\cite[Th\'{e}or\`{e}me 1 of $n^\circ$ $29$]{serre1955faisceaux}) of Leray's theorem.
\end{proof}

We are now able to express $\Pic X_K$ in terms of $X$.

\begin{prop}\label{3.14}
Let $X$ be an irreducible monoid scheme and $\mathcal{U}=\{U_\alpha\}$ an affine open cover satisfying Condition \ref{condition: condition for open cover}. Let $K$ be an idempotent semifield and let $X_K=X\times_{\Spec\mathbb{F}_1}\Spec K$.  Then we have 
\[
\Pic (X_K) \cong \HH^1(X,\K \times \mathcal{O}_X^\times).
\]
\end{prop}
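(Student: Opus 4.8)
The plan is to obtain the isomorphism simply by concatenating the identifications already established in this section, so the proof will be a short chaining argument. First I would invoke the theorem of \cite{jun2015cech} recalled in Section 2, which gives $\Pic(X_K)\simeq \HH^1(X_K,\mathcal{O}_{X_K}^\times)$; thus it suffices to compute the latter group. By Lemma \ref{lem: open embedding}, $\mathcal{U}_K=\{U_\alpha\times_{\Spec\mathbb{F}_1}\Spec K\}$ is a genuine affine open cover of $X_K$, and Corollary \ref{Kacylcic} (which rests on Theorem \ref{thm: affine vanishing}, i.e. the acyclicity of $\Spec K[M]$ for $M$ cancellative, fed into Serre's form of Leray's theorem) shows that this cover computes degree-one cohomology: $\HH^1(X_K,\mathcal{O}_{X_K}^\times)=\HH^1(\mathcal{U}_K,\mathcal{O}_{X_K}^\times)$.

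Next I would apply Theorem \ref{almostmainthm}, whose hypotheses are exactly those in force here, to get $\HH^1(\mathcal{U}_K,\mathcal{O}_{X_K}^\times)\cong \HH^1(\mathcal{U},\K\times\mathcal{O}_X^\times)$; this is the step carrying the real content, since it uses Proposition \ref{KMunits} ($K[M]^\times\cong K^\times\times M^\times$) together with irreducibility of $X$ to match the \v{C}ech cochain complexes and their differentials. Finally, Corollary \ref{F1acyclic} identifies the \v{C}ech cohomology of the cover $\mathcal{U}$ with honest sheaf cohomology on $X$, namely $\HH^1(\mathcal{U},\K\times\mathcal{O}_X^\times)=\HH^1(X,\K\times\mathcal{O}_X^\times)$. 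Composing the four identifications gives
\[
\Pic(X_K)\simeq\HH^1(X_K,\mathcal{O}_{X_K}^\times)=\HH^1(\mathcal{U}_K,\mathcal{O}_{X_K}^\times)\cong\HH^1(\mathcal{U},\K\times\mathcal{O}_X^\times)=\HH^1(X,\K\times\mathcal{O}_X^\times),
\]
which is the assertion of the proposition.

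I do not expect a genuine obstacle here: all the difficulty has been front-loaded into Theorem \ref{almostmainthm} and into the acyclicity statements, whose proofs hinge on $K[M]$ (resp. $M$) having a unique maximal ideal and on Proposition \ref{KMunits}. The only point requiring care is bookkeeping — that the cover $\mathcal{U}_K$ appearing in Corollary \ref{Kacylcic} is literally the one used in Theorem \ref{almostmainthm}, and that every finite intersection $U_{\alpha_1\ldots\alpha_n}$ is, by Condition \ref{condition: condition for open cover}, of the form $\Spec M$ with $M$ cancellative, so that $U_{\alpha_1\ldots\alpha_n}^K\cong\Spec K[M]$ is acyclic and the Leray--Serre comparison is legitimate. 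Since Condition \ref{condition: condition for open cover} provides exactly this, the chain of isomorphisms is valid and the proof is complete.
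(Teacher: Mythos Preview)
Your proof is correct and follows essentially the same approach as the paper: both chain together Corollary \ref{Kacylcic}, Theorem \ref{almostmainthm}, and Corollary \ref{F1acyclic} to obtain the string of isomorphisms $\Pic(X_K)\cong\HH^1(\mathcal{U}_K,\mathcal{O}_{X_K}^\times)\cong\HH^1(\mathcal{U},\K\times\mathcal{O}_X^\times)\cong\HH^1(X,\K\times\mathcal{O}_X^\times)$. Your version simply spells out more of the bookkeeping (the role of Lemma \ref{lem: open embedding} and the identification $\Pic(X_K)\simeq\HH^1(X_K,\mathcal{O}_{X_K}^\times)$) that the paper leaves implicit.
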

\begin{proof}
Let $\mathcal{U}_K=\{U_\alpha\times_{\Spec\mathbb{F}_1}\Spec K\}$.  Then by Theorem \ref{almostmainthm} and Corollaries \ref{Kacylcic} and \ref{F1acyclic}, we have
\[
\Pic (X_K) \cong \HH^1(\mathcal{U}_K,\mathcal{O}_{X_K}^\times)\cong \HH^1(\mathcal{U},\K\times\mathcal{O}_X^\times)\cong  \HH^1(X,\K \times \mathcal{O}_X^\times).
\] 
\end{proof}

\begin{prop}\label{proposition: picard for monoid schemes}
Let $X$ be an irreducible monoid scheme.  Then we have
\[
\Pic (X)=\HH^1(X,\K\times\mathcal{O}_X^\times).
\]
\end{prop}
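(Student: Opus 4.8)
The plan is to reduce the statement to two standard facts: that $\Pic(X)$ is computed by $\HH^1(X,\mathcal{O}_X^\times)$, and that the constant sheaf $\K$ contributes nothing to first cohomology on an irreducible space.

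First I would recall from \eqref{equation: pic=derived=cech} that for any monoid scheme $X$ one has $\Pic(X)\simeq \HH^1(X,\mathcal{O}_X^\times)$, where $\mathcal{O}_X^\times$ is regarded as a sheaf of abelian groups. Hence it suffices to produce an isomorphism $\HH^1(X,\K\times\mathcal{O}_X^\times)\cong\HH^1(X,\mathcal{O}_X^\times)$. Since $\K\times\mathcal{O}_X^\times$ is a finite direct product (equivalently, finite direct sum) of sheaves of abelian groups, and since sheaf cohomology commutes with such finite products, there is a natural splitting $\HH^1(X,\K\times\mathcal{O}_X^\times)\cong\HH^1(X,\K)\oplus\HH^1(X,\mathcal{O}_X^\times)$. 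So everything comes down to showing $\HH^1(X,\K)=0$.

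Next I would argue that the constant sheaf $\K$ on $X$ is flasque. Because $X$ is irreducible, every nonempty open subset $U\subseteq X$ is itself irreducible, hence connected; therefore $\K(U)=K^\times$ for every nonempty $U$, and the restriction maps between nonempty opens are the identity. Consequently $\K$ is flasque, and flasque sheaves are acyclic for sheaf cohomology, so $\HH^k(X,\K)=0$ for all $k\geq 1$; in particular $\HH^1(X,\K)=0$. Combining the displayed splitting with this vanishing and with \eqref{equation: pic=derived=cech} gives $\Pic(X)\simeq\HH^1(X,\mathcal{O}_X^\times)\cong\HH^1(X,\K\times\mathcal{O}_X^\times)$, as desired. (Alternatively one can run the argument at the level of \v{C}ech $1$-cocycles: a cocycle for $\K\times\mathcal{O}_X^\times$ splits as a pair, and any \v{C}ech $1$-cocycle valued in a constant sheaf on an irreducible space is a coboundary because all the relevant intersections of nonempty opens are nonempty and connected.)

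I do not expect a serious obstacle here; the only points requiring a little care are the flasqueness of a constant sheaf on an irreducible space and the additivity of $\HH^1$ over finite products, both of which are standard. The conceptual content is simply that the ``extra'' constant factor $K^\times$, although it changes $\HH^0$, is invisible to $\HH^1$ on an irreducible base, so that $\Pic(X)$ is recovered unchanged.
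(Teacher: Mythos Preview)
Your proof is correct and follows essentially the same approach as the paper: both use that the constant sheaf $\K$ is flasque on an irreducible space (hence $\HH^1(X,\K)=0$) together with the fact that $\HH^1$ respects finite products to conclude $\HH^1(X,\K\times\mathcal{O}_X^\times)=\HH^1(X,\K)\times\HH^1(X,\mathcal{O}_X^\times)=\Pic(X)$. The paper's proof is merely more terse, omitting the explanation of why $\K$ is flasque and the appeal to additivity that you spell out.
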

\begin{proof}
Since the constant sheaf on an irreducible space is flasque, we have that $\HH^1(X,\K)=0$. Then it follows that
\[
\HH^1(X,\K\times\mathcal{O}_X^\times)=\HH^1(X,\K)\times \HH^1(X,\mathcal{O}_X^\times)=0\times \Pic (X)=\Pic(X).
\] 
\end{proof}

Combining the two previous propositions gives the following theorem.

\begin{thm}\label{picthm}
Let $X$ be an irreducible monoid scheme and $\mathcal{U}=\{U_\alpha\}$ is an affine open cover satisfying Condition \ref{condition: condition for open cover}. Let $K$ be an idempotent semifield and let $X_K=X\times_{\Spec\mathbb{F}_1}\Spec K$.  Then 
\[
\Pic (X_K) \cong \Pic (X).
\]
\end{thm}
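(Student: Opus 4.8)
The plan is to simply assemble the already-established propositions. The theorem asserts $\Pic(X_K) \cong \Pic(X)$ for an irreducible monoid scheme $X$ admitting an affine open cover $\mathcal{U}=\{U_\alpha\}$ satisfying Condition \ref{condition: condition for open cover}, where $K$ is an idempotent semifield and $X_K = X\times_{\Spec\mathbb{F}_1}\Spec K$. All the work has been done in the lemmas and propositions leading up to this point, so the proof is a two-line chain of isomorphisms.

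First I would invoke Proposition \ref{3.14}, which gives $\Pic(X_K)\cong \HH^1(X,\K\times\mathcal{O}_X^\times)$; this is exactly the step that uses the hypothesis on the open cover (via Theorem \ref{almostmainthm} and the acyclicity Corollaries \ref{Kacylcic} and \ref{F1acyclic}, i.e., Leray's theorem applied to the covers $\mathcal{U}_K$ and $\mathcal{U}$). Then I would invoke Proposition \ref{proposition: picard for monoid schemes}, which identifies $\HH^1(X,\K\times\mathcal{O}_X^\times)$ with $\Pic(X)$, using that the constant sheaf $\K$ on the irreducible space $X$ is flasque and hence $\HH^1(X,\K)=0$. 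Composing these two identifications yields
\[
\Pic(X_K)\cong \HH^1(X,\K\times\mathcal{O}_X^\times)\cong \Pic(X),
\]
which is the claim. One should also note that Proposition \ref{proposition: picard for monoid schemes} does not itself require the condition on the open cover — only irreducibility — so the entire use of Condition \ref{condition: condition for open cover} is funneled through Proposition \ref{3.14}.

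There is essentially no obstacle at this stage: the genuinely substantive content lives earlier in the section. If I were to flag anything, it is purely bookkeeping — making sure the cover $\mathcal{U}$ in the statement of Proposition \ref{3.14} is the same $\mathcal{U}$ used in the theorem (it is, by hypothesis) and that the base-change cover $\mathcal{U}_K$ is genuinely an open cover of $X_K$, which is guaranteed by Lemma \ref{lem: open embedding}. So the proof proposal is simply: cite Propositions \ref{3.14} and \ref{proposition: picard for monoid schemes} and compose.
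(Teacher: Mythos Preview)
Your proposal is correct and matches the paper's own proof exactly: the paper simply states that the theorem follows by combining Proposition~\ref{3.14} with Proposition~\ref{proposition: picard for monoid schemes}. Your additional remarks about where Condition~\ref{condition: condition for open cover} enters and the role of Lemma~\ref{lem: open embedding} are accurate elaborations of that same argument.
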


\begin{rmk}\label{rmk:Fulton}
We remark that the statements in this section still hold in the case when $K$ is not an idempotent semifield but rather the field of complex numbers $\mathbb{C}$, even though they may require different proofs. As mentioned in Remark~\ref{rmk: toricDeitmar} if $X$ is a connected integral monoid scheme of finite type then $X_{\mathbb{C}}$ is a toric variety. First, we observe that Condition \ref{condition: condition for open cover} holds for toric varieties. 

It is easy to see that Proposition \ref{KMunits} 
holds for $K = \mathbb{C}$. Proposition \ref{lem: open embedding} 
is true (via the extension of scalars functor). The proof of Theorem~\ref{almostmainthm} 
is the same over $\mathbb{C}$. Proposition~ \ref{thm: affine vanishing} 
is a classical result for affine (toric) varieties and implies the statement of Corollary \ref{Kacylcic}. 
Proposition~\ref{3.14} 
and Proposition~\ref{proposition: picard for monoid schemes} 
follow from the previous statements with the same proof and together imply Theorem~\ref{picthm}. 
This way we obtain that $\HH^1(X,\mathcal{O}_X^\times) = \HH^1(X_{\mathbb{C}},\mathcal{O}_{X_{\mathbb{C}}}^\times)$. 
This gives us a different proof of Theorem 6.6 of \cite{flores2014picard}. As observed in \cite{flores2014picard} the Cartier divisors on $X$ lift to torus-invariant Cartier divisors on $X_{\mathbb{C}}$ and this way we recover Fulton's result in \cite{Fulton} Section 3.4 that the Picard group of a toric variety is generated by torus invariant divisors.

\end{rmk}

\section{Cartier divisors on cancellative semiring schemes}
In this section, we define a Cartier divisor on a cancellative semiring scheme $X$, following the idea of Flores and Weibel \cite{flores2014picard}. We show that the Picard group $\Pic(X)$ is isomorphic to the group $\Cart(X)$ of Cartier divisors modulo principal Cartier divisors. In what follows, by an \emph{integral semiring}, we mean a semiring without zero divisors.\\
Let $A$ be an integral semiring and ${\mathfrak{p}} \in \Spec A$. We will call an element $f \in A^\times$ (multiplicatively) \emph{cancellable}, if the following condition holds:
\begin{equation}
af=bf \textrm{ implies } a=b, \quad \forall a, b \in A. 
\end{equation}
By a \emph{cancellative semiring}, we mean a semiring $A$ such that any nonzero element $a \in A$ is cancellable. 

\begin{mydef}\label{def:integral semiring scheme}
By an \emph{integral semiring scheme}, we mean a semiring scheme $X$ such that for any open subset $U$ of $X$, $\mathcal{O}_X(U)$ is an integral semiring. Moreover, if $\mathcal{O}_X(U)$ is cancellative for any open subset $U$ of $X$, we call $X$ a \emph{cancellative semiring scheme}. 
\end{mydef}

\begin{rmk}\label{rem: integral}
Classically, a nontrivial ring without zero divisors is multiplicatively cancellative and vice versa. However, a semiring without zero divisors is not necessarily cancellative - such as the polynomial semiring over the tropical semifield $\rma[x_1, \dots, x_n]$. Nonetheless, any nontrivial cancellative semiring is integral and hence any nontrivial cancellative semiring scheme is an integral semiring scheme. 
\end{rmk}

\begin{lem}\label{lem: unique generic point}
Let $X$ be an integral semiring scheme. Then $X$ has a unique generic point. 
\end{lem}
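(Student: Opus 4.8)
The statement to prove is that an integral semiring scheme $X$ has a unique generic point, where a generic point means a point $\xi$ whose closure is all of $X$. The plan is to reduce to the affine case and then use the integrality hypothesis to show the nilradical (the intersection of all primes) is prime, so it serves as the unique minimal prime, which is the generic point.

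\textbf{Step 1: Reduce to the affine case.} Cover $X$ by affine opens $\Spec A_i$. Since $\mathcal{O}_X(U)$ is integral for every open $U$, in particular each $A_i$ is an integral semiring. I will first show each affine piece $\Spec A_i$ has a unique generic point, then argue these glue. Actually it is cleaner to first show $X$ is irreducible: if $X = Z_1 \cup Z_2$ with $Z_i$ proper closed, then $U_1 = X \setminus Z_1$ and $U_2 = X \setminus Z_2$ are disjoint nonempty opens, and $\mathcal{O}_X(U_1 \sqcup U_2) = \mathcal{O}_X(U_1) \times \mathcal{O}_X(U_2)$ has zero divisors (the idempotents $(1,0)$ and $(0,1)$ multiply to zero), contradicting integrality. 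So $X$ is irreducible.

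\textbf{Step 2: Existence of a generic point on an affine integral semiring scheme.} Let $\Spec A \subseteq X$ be a nonempty affine open, so $A$ is integral. I claim the nilradical $\mathfrak{n} = \bigcap_{\mathfrak{p} \in \Spec A} \mathfrak{p}$ is a prime ideal. Indeed $\mathfrak{n}$ is the set of nilpotent elements: this requires the semiring analogue of the fact that an element lies in every prime iff it is nilpotent, which follows from the standard localization/Zorn's lemma argument carried out in \cite{semibook} for semirings (if $f$ is not nilpotent, the multiplicative set $\{1, f, f^2, \dots\}$ avoids $0$, so one extends to a prime avoiding it). Then integrality of $A$ (no zero divisors) forces every nilpotent to be $0$, so $\mathfrak{n} = (0)$ — wait, one must be careful: integral means no zero divisors, and if $f^n = 0$ with $n$ minimal $> 1$ then $f \cdot f^{n-1} = 0$ with $f^{n-1} \neq 0$, contradiction; so indeed $\mathfrak{n} = (0)$. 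Hence $(0)$ is prime, and it is the unique minimal prime, whose closure $V(0)$ is all of $\Spec A$. So $\eta := (0) \in \Spec A$ is a generic point of $\Spec A$, and it is unique since any generic point must lie in every nonempty open, in particular be a minimal prime.

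\textbf{Step 3: Gluing / uniqueness on $X$.} By Step 1, $X$ is irreducible, and a generic point of an irreducible space (if it exists) is automatically unique, since two generic points $\xi_1, \xi_2$ each lie in every nonempty open set, and in a sober-type argument... — here I should be slightly careful, since uniqueness of generic points is genuinely a sobriety statement. Instead: the point $\eta = (0)$ of the affine open $\Spec A$ constructed in Step 2 lies in every nonempty open subset of $\Spec A$; since $\Spec A$ is dense in $X$ (irreducibility), $\eta$ lies in every nonempty open of $X$, hence $\overline{\{\eta\}} = X$, giving existence. For uniqueness: if $\eta'$ is another generic point, then $\eta' \in \Spec A$ (a nonempty open), and $\overline{\{\eta'\}} \supseteq X \cap \Spec A = \Spec A$ forces $\eta'$ to be the minimal prime $(0)$ of $A$, i.e. $\eta' = \eta$. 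This completes the argument.

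\textbf{Main obstacle.} The main technical point is justifying that in an integral semiring the only element lying in every prime ideal is $0$ — i.e. that the nilradical equals the intersection of all primes and that integrality kills nilpotents. The nilpotent-kills argument is elementary; the "every non-nilpotent avoids some prime" direction needs the semiring version of the prime avoidance/Zorn argument, which is available in \cite{semibook}. A secondary subtlety is the sobriety-flavored uniqueness, which I circumvent above by pinning the generic point down inside a fixed affine chart as the minimal prime $(0)$.
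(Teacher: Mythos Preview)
Your proof is correct and follows the same strategy as the paper's one-line argument---show that an integral semiring scheme is irreducible and then identify the generic point---but you flesh out the details the paper waves away with ``as in the classical case.'' Two minor remarks: the nilradical detour in Step~2 is unnecessary, since integrality of $A$ is exactly the statement that $(0)$ is a prime ideal; and your caution in Step~3 about sobriety is well-placed, since the paper's claim that ``any irreducible topological space has a unique generic point'' is literally false without a sobriety hypothesis and really uses (as you do) that the generic point can be pinned down as the zero ideal in any affine chart.
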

\begin{proof}
As in the classical case, if $X$ is an integral semiring scheme, then $X$ is irreducible and any irreducible topological space has a unique generic point. 
\end{proof}

\begin{lem}\label{lem; affinecase}
Let $X$ be an integral semiring scheme with a generic point $\eta$. Then for any affine open subset $U=\Spec A \subseteq X$, we have
\[
\mathcal{O}_{X,\eta} \simeq A_{(0)},
\]
where $A_{(0)}=\Frac(A)$. In particular, $\mathcal{O}_{X,\eta}$ is a semifield.
\end{lem}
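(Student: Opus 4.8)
The plan is to identify the generic point $\eta$ with the zero ideal inside any affine chart and then invoke the stalk computation for affine semiring schemes. By Lemma~\ref{lem: unique generic point} the space $X$ is irreducible, so $\overline{\{\eta\}} = X$ and hence $\eta$ lies in every nonempty open subset; in particular $\eta \in U$, and under the homeomorphism $U \cong \Spec A$ it corresponds to some prime ideal $\mathfrak{p}_\eta$ of $A = \mathcal{O}_X(U)$. The closure of $\{\eta\}$ in the subspace $U$ is $\overline{\{\eta\}} \cap U = U$, so transporting along $U \cong \Spec A$ gives $V(\mathfrak{p}_\eta) = \Spec A$; that is, $\mathfrak{p}_\eta$ is contained in every prime ideal of $A$. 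Since $A$ is an integral semiring, $(0)$ is itself a prime ideal, whence $\mathfrak{p}_\eta \subseteq (0)$ and therefore $\mathfrak{p}_\eta = (0)$.

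Next I would compute the stalk. As $U$ is open in $X$, the open neighborhoods of $\eta$ contained in $U$ are cofinal among all open neighborhoods of $\eta$, so $\mathcal{O}_{X,\eta} = \mathcal{O}_{U,\eta}$. By the affine stalk formula for semiring schemes — established by the usual direct-limit computation, using that sections of the structure sheaf are by construction locally representable by fractions — one has $\mathcal{O}_{U,\eta} \cong A_{\mathfrak{p}_\eta} = A_{(0)}$, where, in the notation for localization recalled in the preliminaries, $A_{(0)} = S^{-1}A$ with $S = A - \{0\}$. This $S$ is a multiplicative subset precisely because $A$ has no zero divisors, and $A_{(0)}$ is by definition $\Frac(A)$.

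Finally, to see that $A_{(0)}$ is a semifield, I would observe that for $s \in S$ a class $\tfrac{a}{s}$ equals $0$ if and only if $ta = 0$ for some $t \in S$, which forces $a = 0$ by integrality of $A$; hence every nonzero element of $A_{(0)}$ is of the form $\tfrac{a}{s}$ with $a, s \in S$, and $\tfrac{s}{a} \in A_{(0)}$ is then a multiplicative inverse. Combining the three steps yields $\mathcal{O}_{X,\eta} \cong A_{(0)} = \Frac(A)$, which is a semifield.

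The only step that genuinely needs checking in the semiring setting is the affine stalk identification $\mathcal{O}_{\Spec A,\mathfrak{p}} \cong A_\mathfrak{p}$: surjectivity of the canonical map $A_\mathfrak{p} \to \mathcal{O}_{\Spec A,\mathfrak{p}}$ holds because every germ is represented by a fraction on some basic open $D(f)$ with $f \notin \mathfrak{p}$, and injectivity follows from the equivalence relation defining $S^{-1}A$. This is the same argument as in the classical case and is in any event already implicit when one asserts that the stalks of a semiring scheme are local; the remaining steps are formal.
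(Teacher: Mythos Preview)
Your proof is correct and follows essentially the same route as the paper: identify the generic point $\eta$ with the zero ideal of $A$ inside the affine chart $U=\Spec A$ (using that $A$ is integral so $(0)$ is prime), then use the affine stalk formula to get $\mathcal{O}_{X,\eta}\cong A_{(0)}$. You are more explicit than the paper—supplying the cofinality argument for $\mathcal{O}_{X,\eta}=\mathcal{O}_{U,\eta}$, the verification that $A_{(0)}$ is a semifield, and a sanity check on the stalk identification for semiring schemes—but the underlying argument is the same.
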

\begin{proof}
Since $X$ is integral, $A$ is an integral semiring. In particular, $\mathfrak{p}=(0) \in \Spec A$ is the generic point of $\Spec A$. Again, since $X$ is integral, it follows that $\mathfrak{p}$ is also the generic point of $X$. Therefore we have $\mathcal{O}_{X,\eta}=A_{(0)}$. 
\end{proof}

\begin{mydef}\label{def:functionfield}
Let $X$ be an integral semiring scheme and $U=\Spec A$ be any affine open subset. We define the \emph{function field} $K(X)$ of $X$ as follows:
\[K(X)=\mathcal{O}_{X,\eta},\]
where $\eta$ is the generic point of $U$. 
\end{mydef}

\begin{myeg}\label{functionfieldeg}
Let $X=\Spec \rma[x]$, an affine line. Let $A=\rma[x]$. One can easily see that $A$ is an integral semiring and hence the generic point is $\eta=(0)$. Therefore, we have
\[K(X)=\mathcal{O}_{X,\eta}=\{ \frac{g}{f}\mid g \in A, f \in A\backslash \{0\}\}.
\]
\end{myeg}

\begin{prop}\label{prop: subsheaf}
Let $X$ be a cancellative semiring scheme with the function field $K$. Let $\mathcal{K}$ be the constant sheaf associated to $K$ on $X$. Then $\mathcal{O}_X^\times$ is a subsheaf (of abelian groups) of $\mathcal{K}^\times$. 
\end{prop}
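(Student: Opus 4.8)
The plan is to show directly that for each open $U \subseteq X$, the restriction of the constant sheaf $\mathcal{K}^\times$ to $U$ contains $\mathcal{O}_X^\times(U)$ as a subgroup in a way compatible with restrictions. First I would unwind the statement: since $\mathcal{K}$ is the constant sheaf associated to the function field $K = K(X) = \mathcal{O}_{X,\eta}$ (Definition \ref{def:functionfield}), and $X$ is irreducible by Lemma \ref{lem: unique generic point}, the constant sheaf $\mathcal{K}$ on any nonempty open $U$ just assigns $K$ itself, and $\mathcal{K}^\times(U) = K^\times$. So the task reduces to producing, for each nonempty open $U$, an injective group homomorphism $\mathcal{O}_X(U)^\times \hookrightarrow K^\times$ which is natural in $U$; on the empty set both sheaves are trivial, so there is nothing to check there.

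The key construction is the generization map. For an affine open $U = \Spec A \subseteq X$, the generic point $\eta$ lies in $U$ (it is the point $(0)$, by the argument of Lemma \ref{lem; affinecase}), so there is a canonical restriction-to-stalk homomorphism $\mathcal{O}_X(U) \to \mathcal{O}_{X,\eta} = A_{(0)} = \Frac(A) = K$, which is just the localization map $A \to A_{(0)}$. For general open $U$, cover $U$ by affines and glue; alternatively, since $\eta \in U$ for every nonempty open $U$ (as $\eta$ is the generic point of the irreducible space $X$), there is always a canonical map $\mathcal{O}_X(U) \to \mathcal{O}_{X,\eta} = K$. These maps are compatible with restriction by the very definition of the stalk as a colimit, so they assemble into a morphism of sheaves $\mathcal{O}_X \to \mathcal{K}$, hence a morphism $\mathcal{O}_X^\times \to \mathcal{K}^\times$ on the unit groups.

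It then remains to check that $\mathcal{O}_X(U) \to K$ is injective for each $U$; this is where cancellativity is used and is the main point of the argument. For $U = \Spec A$ affine, the map $A \to \Frac(A)$ sends $a \mapsto \tfrac{a}{1}$, and $\tfrac{a}{1} = \tfrac{b}{1}$ in $A_{(0)}$ means there is a nonzero $t \in A$ with $ta = tb$; since $A$ is cancellative, this forces $a = b$. For general $U$, injectivity of $\mathcal{O}_X(U) \to K$ follows because a section restricting to $0$ in the stalk $\mathcal{O}_{X,\eta}$ restricts to $0$ on some affine neighborhood of $\eta$, hence on all of $U$ by irreducibility together with the affine case (a section of an integral semiring scheme vanishing on a nonempty open vanishes everywhere, by the usual argument that the support of a section is closed and its complement is dense). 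Restricting the injection $\mathcal{O}_X(U) \hookrightarrow K$ to unit groups gives the injection $\mathcal{O}_X^\times(U) \hookrightarrow \mathcal{K}^\times(U)$, and naturality is automatic. The one technical wrinkle — the main obstacle — is being careful that cancellativity of $A$ genuinely propagates to the conclusion $\mathcal{O}_X(U) \hookrightarrow K$ for non-affine $U$, i.e. that distinct sections stay distinct in the generic stalk; this is handled by the density of any nonempty open in the irreducible space $X$ and the sheaf axiom, exactly as in the classical scheme-theoretic setting.
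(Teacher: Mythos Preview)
Your proposal is correct and follows essentially the same route as the paper: reduce to the affine case $U=\Spec A$, use cancellativity to obtain an injection $\mathcal{O}_X(U)\hookrightarrow \Frac(A)=K$, and then pass to units. The only cosmetic difference is that the paper phrases the affine embedding via the formula $\mathcal{O}_X(U)=\bigcap_{D(f)\subseteq U} A_f\subseteq \Frac(A)$ (using that each $A_f\hookrightarrow \Frac(A)$ is injective by cancellativity), whereas you use the restriction-to-stalk map $\mathcal{O}_X(U)\to\mathcal{O}_{X,\eta}$ directly; these are the same argument in slightly different packaging.
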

\begin{proof}
First, suppose that $X$ is affine, i.e., $X=\Spec A$ for some cancellative semiring $A$. Notice that, since $A$ is cancellative, for any $f \in A\setminus\{0\}$, we have a canonical injection $i_f:A_f \to \Frac(A)$. Therefore, as in the classical case, for each open subset $U$ of $X$, we have
\begin{equation}
\mathcal{O}_X(U)=\bigcap_{D(f) \subseteq U}A_f \subseteq \Frac(A)=\mathcal{K}(U). 
\end{equation}
It follows that $\mathcal{O}_X^\times(U) \subseteq \mathcal{K}^\times(U)$. In general, one can cover $X$ with affine open subsets and the argument reduces to the case when $X$ is affine; this is essentially due to Lemma \ref{lem; affinecase}.  
\end{proof}

Thanks to Proposition \ref{prop: subsheaf}, we can define a Cartier divisor on a cancellative semiring scheme $X$ as follows:

\begin{mydef}\label{def:cartier}
Let $X$ be a cancellative semiring scheme with the function field $K$. Let $\mathcal{K}$ be the constant sheaf associated to $K$ on $X$. A \emph{Cartier divisor} on $X$ is a global section of the sheaf of abelian groups $\mathcal{K}^\times/\mathcal{O}_X^\times$.
\end{mydef}

We recall the notion of Cartier divisors on a cancellative monoid scheme. Let $X$ be a cancellative and irreducible monoid scheme with a generic point $\eta$. Denote by $K=\mathcal{O}_{X,\eta}$ the stalk at $\eta$ and by $\mathcal{K}$ be the associated constant sheaf.

\begin{mydef}\cite[\S 6]{flores2014picard}
\begin{enumerate}
\item 
A \emph{Cartier divisor} is a global section of the sheaf $\mathcal{K}^\times/\mathcal{O}_X^\times$ of abelian groups. We let $\Cart(X)$ be the group of Cartier divisors on $X$.
\item
A \emph{principal Cartier divisor} is a Cartier divisor which is represented by some $a \in \mathcal{O}_{X,\eta}^\times$. Let $P(X)$ be the subgroup of $\Cart(X)$ consisting of principal Cartier divisors. 
\item
We let $\CaCl(X)=\Cart(X)/P(X)$ be the group of Cartier divisors modulo principal Cartier divisors. 
\end{enumerate}
\end{mydef}

Flores and Weibel prove the following. 

\begin{mytheorem}\cite[Proposition 6.1.]{flores2014picard}\label{theorem: pic=car}
Let $X$ be a cancellative monoid scheme.  Then 
\[
\Pic(X) \simeq \CaCl(X). 
\]
\end{mytheorem}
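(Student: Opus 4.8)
The plan is to mimic the classical argument that $\Pic(X) \simeq \CaCl(X)$ for an integral scheme, using the tools already developed in the excerpt, in particular Proposition~\ref{prop: subsheaf} which gives the inclusion of sheaves $\mathcal{O}_X^\times \hookrightarrow \mathcal{K}^\times$. First I would record the short exact sequence of sheaves of abelian groups on $X$
\[
1 \longrightarrow \mathcal{O}_X^\times \longrightarrow \mathcal{K}^\times \longrightarrow \mathcal{K}^\times/\mathcal{O}_X^\times \longrightarrow 1,
\]
which makes sense precisely because of Proposition~\ref{prop: subsheaf}. Taking the associated long exact cohomology sequence gives
\[
\HH^0(X,\mathcal{K}^\times) \longrightarrow \HH^0(X,\mathcal{K}^\times/\mathcal{O}_X^\times) \longrightarrow \HH^1(X,\mathcal{O}_X^\times) \longrightarrow \HH^1(X,\mathcal{K}^\times).
\]
Now $\HH^0(X,\mathcal{K}^\times/\mathcal{O}_X^\times) = \Cart(X)$ by Definition~\ref{def:cartier}, the image of $\HH^0(X,\mathcal{K}^\times) = K^\times$ in it is exactly the subgroup $P(X)$ of principal Cartier divisors, and $\HH^1(X,\mathcal{O}_X^\times) \simeq \Pic(X)$ by the semiring-scheme analogue of Equation~\eqref{equation: pic=derived=cech} (the theorem of \cite{jun2015cech} quoted in the excerpt). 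So the connecting map induces an injection $\CaCl(X) = \Cart(X)/P(X) \hookrightarrow \Pic(X)$, and it remains to show this map is surjective, equivalently that $\HH^1(X,\mathcal{K}^\times) = 0$.

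The second step is therefore the vanishing $\HH^1(X,\mathcal{K}^\times)=0$. Here $\mathcal{K}$ is the constant sheaf associated to the function field $K = K(X) = \mathcal{O}_{X,\eta}$ of Definition~\ref{def:functionfield}, and $X$ is irreducible by Lemma~\ref{lem: unique generic point} (a cancellative semiring scheme is integral by Remark~\ref{rem: integral}, hence irreducible). A constant sheaf of abelian groups on an irreducible topological space is flasque, and flasque sheaves are acyclic, so $\HH^i(X,\mathcal{K}) = 0$ for $i \geq 1$; since $\mathcal{K}^\times$ is a direct factor of $\mathcal{K}$ as a sheaf of abelian groups (or simply since the constant sheaf $\mathcal{K}^\times$ associated to the group $K^\times$ is again flasque on an irreducible space by the identical argument), we get $\HH^1(X,\mathcal{K}^\times) = 0$. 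This is exactly the type of flasqueness argument already invoked in the proof of Proposition~\ref{proposition: picard for monoid schemes}, so it is entirely in the spirit of the paper.

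Combining the two steps, the connecting homomorphism $\Cart(X) \to \HH^1(X,\mathcal{O}_X^\times) \simeq \Pic(X)$ is surjective with kernel $P(X)$, hence induces an isomorphism $\CaCl(X) \xrightarrow{\ \sim\ } \Pic(X)$. As a sanity check one can also give the concrete description of this isomorphism in terms of \v{C}ech data: a Cartier divisor is represented by a collection $\{f_\alpha \in \mathcal{K}^\times(U_\alpha)\}$ with $f_\alpha/f_\beta \in \mathcal{O}_X^\times(U_{\alpha\beta})$ on an affine open cover, and the associated invertible sheaf is the subsheaf $\mathcal{O}_X \cdot f_\alpha^{-1}$ of $\mathcal{K}$; this description shows directly why the map is well-defined and a group homomorphism, and the flasqueness of $\mathcal{K}^\times$ is what guarantees every $1$-cocycle with values in $\mathcal{O}_X^\times$ arises this way.

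I expect the main obstacle to be purely foundational rather than computational: one must be careful that the quotient presheaf $U \mapsto \mathcal{K}^\times(U)/\mathcal{O}_X^\times(U)$ versus its sheafification behaves as in the classical setting, and that the long exact sequence in sheaf cohomology (equivalently \v{C}ech cohomology in degrees $0,1$) is available for sheaves of abelian groups on a semiring scheme. Both are addressed by the cohomological machinery of \cite{jun2015cech} cited in the excerpt together with the identification $\HH^1 = \cHH^1$ noted after Equation~\eqref{equation: pic=derived=cech}, so once those are granted the argument is routine. The one place genuinely specific to semirings is the use of Remark~\ref{rem: integral} and Proposition~\ref{prop: subsheaf} to make sense of $\mathcal{O}_X^\times \subseteq \mathcal{K}^\times$ in the first place — cancellativity, not merely integrality, is what is needed, and that hypothesis is built into the statement.
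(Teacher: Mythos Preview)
Your argument is the right one, but you have applied it to the wrong category: the statement labeled \texttt{theorem: pic=car} is about a cancellative \emph{monoid} scheme, whereas every reference you invoke (Proposition~\ref{prop: subsheaf}, Lemma~\ref{lem: unique generic point}, Remark~\ref{rem: integral}, Definition~\ref{def:cartier}, the theorem from \cite{jun2015cech}) is stated for \emph{semiring} schemes, and you explicitly treat $X$ as a semiring scheme throughout. In effect you have written a proof of Proposition~\ref{theorem: pic=car for semi} rather than of Theorem~\ref{theorem: pic=car}.

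That said, the paper does not prove Theorem~\ref{theorem: pic=car} at all --- it simply cites \cite[Proposition 6.1]{flores2014picard}. The paper \emph{does} prove the semiring analogue (Proposition~\ref{theorem: pic=car for semi}) using precisely your strategy: the short exact sequence $0\to\mathcal{O}_X^\times\to\mathcal{K}^\times\to\mathcal{K}^\times/\mathcal{O}_X^\times\to 0$, the long exact sequence in cohomology, and the vanishing $\HH^1(X,\mathcal{K}^\times)=0$ from flasqueness of a constant sheaf on an irreducible space. So your approach matches the paper's exactly --- just for the neighboring proposition. To turn your write-up into an actual proof of Theorem~\ref{theorem: pic=car}, you would replace the semiring references with their monoid-scheme counterparts: the monoid-scheme definition of Cartier divisors given just before the theorem, Equation~\eqref{equation: pic=derived=cech} itself (not its semiring analogue) for $\Pic(X)\simeq\HH^1(X,\mathcal{O}_X^\times)$, and the observation that a cancellative monoid scheme is irreducible with $\mathcal{O}_X^\times\subseteq\mathcal{K}^\times$ (this is in \cite{flores2014picard}, not in the present paper). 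With those substitutions your proof goes through verbatim and coincides with the Flores--Weibel argument.
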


As in the classical case, to any Cartier divisor $D$ on a cancellative semiring scheme $X$, one can associate an invertible sheaf $\mathcal{L}(D)$ on $X$. More precisely, if $D$ is a Cartier divisor represented by $\{U_i,f_i\}$, where $f_i \in \mathcal{O}_X^\times(U_i)$, then one defines an invertible sheaf $\mathcal{L}(D)$ as a subsheaf of $\mathcal{K}$ by requiring that $\mathcal{L}(D)(U_i)$ is generated by $f_i^{-1}$. In fact, the same argument as in the proof of Theorem \ref{theorem: pic=car} shows the following:

\begin{prop}\label{theorem: pic=car for semi}
Let $X$ be a cancellative semiring scheme. Then the map $\varphi:\CaCl(X) \to \Pic(X)$ sending $[D]$ to $[\mathcal{L}(D)]$ is an isomorphism, where $[D]$ is the equivalence class of $D \in \Cart(X)$ and $[\mathcal{L}(D)]$ is the equivalence class of $\mathcal{L}(D)$ in $\Pic(X)$.
\end{prop}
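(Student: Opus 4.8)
\textbf{Proof proposal for Proposition \ref{theorem: pic=car for semi}.}

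The plan is to mimic the proof of Theorem \ref{theorem: pic=car} of Flores and Weibel, so I will spell out the two mutually inverse maps and check they are well-defined group homomorphisms inverse to each other. First I would verify that $\varphi$ is well-defined: given a Cartier divisor $D$ represented by a compatible family $\{(U_i,f_i)\}$ with $f_i \in \mathcal{K}^\times(U_i)$ and $f_i/f_j \in \mathcal{O}_X^\times(U_i\cap U_j)$, the recipe $\mathcal{L}(D)(U_i) := f_i^{-1}\mathcal{O}_X(U_i) \subseteq \mathcal{K}(U_i)$ glues to a well-defined subsheaf of $\mathcal{K}$ (the transition on overlaps is multiplication by the unit $f_j/f_i$), and it is locally isomorphic to $\mathcal{O}_X$ because $f_i^{-1}$ is a unit in $\mathcal{K}^\times$ and $\mathcal{O}_X^\times \subseteq \mathcal{K}^\times$ by Proposition \ref{prop: subsheaf}. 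Replacing the representing data by an equivalent one, or refining the cover, changes $\mathcal{L}(D)$ only by a canonical isomorphism, so the class $[\mathcal{L}(D)] \in \Pic(X)$ depends only on $[D]\in\CaCl(X)$; and if $D$ is principal, represented by a single $a \in K^\times = \mathcal{O}_{X,\eta}^\times$, then $\mathcal{L}(D) = a^{-1}\mathcal{O}_X \cong \mathcal{O}_X$, so $P(X)$ maps to the trivial class. Additivity $\mathcal{L}(D+D')\cong \mathcal{L}(D)\otimes_{\mathcal{O}_X}\mathcal{L}(D')$ is immediate from $f_i^{-1}\mathcal{O}_X \otimes g_i^{-1}\mathcal{O}_X \cong (f_ig_i)^{-1}\mathcal{O}_X$ inside $\mathcal{K}$, so $\varphi$ is a homomorphism.

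Next I would construct the inverse. Given an invertible sheaf $\mathcal{L}$, choose a trivializing cover $\{U_i\}$ with isomorphisms $\psi_i:\mathcal{O}_X|_{U_i} \xrightarrow{\sim} \mathcal{L}|_{U_i}$; the transition functions $g_{ij} := \psi_i^{-1}\psi_j \in \mathcal{O}_X^\times(U_i\cap U_j)$ form a \v{C}ech cocycle. Since $X$ is cancellative hence integral (Remark \ref{rem: integral}), it has a unique generic point $\eta$ (Lemma \ref{lem: unique generic point}) and $K = \mathcal{O}_{X,\eta}$ is a semifield (Lemma \ref{lem; affinecase}); the stalk $\mathcal{L}_\eta$ is a free $K$-set of rank one, so picking a generator identifies $\mathcal{L}_\eta\cong K$ and lets us view each $\psi_i$ locally as multiplication by some $f_i^{-1}\in \mathcal{K}^\times(U_i)$, with $f_i/f_j = g_{ij}^{-1}\in\mathcal{O}_X^\times$. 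Then $\{(U_i,f_i)\}$ defines a Cartier divisor $D_\mathcal{L}$, and its class in $\CaCl(X)$ is independent of all choices (a different generator of $\mathcal{L}_\eta$ changes all $f_i$ by a common factor in $K^\times$, i.e. by a principal divisor; refining the cover or changing the $\psi_i$ changes the cocycle by a coboundary, hence $D_\mathcal{L}$ by a principal divisor). This assignment is additive and inverse to $\varphi$ by construction.

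The one genuinely new ingredient compared to the classical scheme case — and what I expect to be the main point requiring care rather than the main obstacle — is the identification of $\mathcal{O}_X^\times$ as a subsheaf of $\mathcal{K}^\times$, which is exactly Proposition \ref{prop: subsheaf} and rests on cancellativity guaranteeing the injections $A_f \hookrightarrow \Frac(A)$ and the formula $\mathcal{O}_X(U) = \bigcap_{D(f)\subseteq U} A_f$; without it the notion of Cartier divisor in Definition \ref{def:cartier} and the passage $\mathcal{L}\mapsto D_\mathcal{L}$ would not make sense. Everything else is formal and runs exactly as in \cite{flores2014picard}: the subtlety about semirings not being cancellative even when integral does not intervene because we assume $X$ cancellative throughout, and no additivity-of-inverses is ever used. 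I would therefore present the proof as: (i) $\varphi$ well-defined and a homomorphism; (ii) construction of $\psi:\Pic(X)\to\CaCl(X)$ via transition cocycles and the generic stalk; (iii) $\psi\circ\varphi=\mathrm{id}$ and $\varphi\circ\psi=\mathrm{id}$ by unwinding definitions — and simply remark that each step is identical to the corresponding step in the proof of Theorem \ref{theorem: pic=car}, the only substitution being Proposition \ref{prop: subsheaf} in place of its monoid-scheme analogue.
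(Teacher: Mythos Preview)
Your argument is correct, but it is \emph{not} the paper's argument (nor, despite your framing, the Flores--Weibel argument that the paper follows). The paper's proof is purely cohomological: from Proposition \ref{prop: subsheaf} one has the short exact sequence of sheaves of abelian groups
\[
0 \longrightarrow \mathcal{O}_X^\times \longrightarrow \mathcal{K}^\times \longrightarrow \mathcal{K}^\times/\mathcal{O}_X^\times \longrightarrow 0,
\]
and since $X$ is irreducible the constant sheaf $\mathcal{K}^\times$ is flasque, so $\HH^1(X,\mathcal{K}^\times)=0$; the long exact sequence then yields
\[
0 \to \mathcal{O}_X^\times(X) \to \mathcal{K}^\times(X) \to \Cart(X) \to \HH^1(X,\mathcal{O}_X^\times) \to 0,
\]
and identifying the image of $\mathcal{K}^\times(X)$ with $P(X)$ and $\HH^1(X,\mathcal{O}_X^\times)$ with $\Pic(X)$ gives the result in one line. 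Your approach instead builds the two maps by hand and checks they are mutually inverse, in the style of Hartshorne's treatment for schemes. Both routes rest on the same new ingredient (Proposition \ref{prop: subsheaf}, which needs cancellativity); the paper's is shorter and makes the role of irreducibility/flasqueness transparent, while yours has the advantage of making the isomorphism $\varphi$ and its inverse completely explicit, at the cost of having to justify that an arbitrary invertible sheaf embeds in $\mathcal{K}$ via the generic stalk (the step where you pick a generator of $\mathcal{L}_\eta$), which you sketch but do not fully spell out.
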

\begin{proof}
The argument is similar to \cite[Propoisiton 6.1]{flores2014picard}, however, we include a proof for the sake of completeness.\\
We have the following short exact sequence of sheaves of abelian groups:
\begin{equation}\label{exact sequence}
0 \longrightarrow \mathcal{O}_X^\times \longrightarrow \mathcal{K}^\times \longrightarrow \mathcal{K}^\times/\mathcal{O}_X^\times \longrightarrow 0.
\end{equation}
Since $X$ is cancellative, $X$ is irreducible. Furthermore, $\mathcal{K}^\times$ is a constant sheaf on $X$ and hence $\mathcal{K}^\times$ is flasque. In particular, $\HH^1(X,\mathcal{K}^\times)=0$. Therefore, the cohomology sequence induced by \eqref{exact sequence} becomes:
\begin{equation}
0 \longrightarrow \mathcal{O}_X^\times(X) \longrightarrow \mathcal{K}^\times(X) \overset{f}\longrightarrow (\mathcal{K}^\times/\mathcal{O}_X^\times)(X) \longrightarrow \HH^1(X,\mathcal{O}_X^\times) \longrightarrow 0. 
\end{equation}
But we have that $\HH^1(X,\mathcal{O}_X^\times)=\Pic(X)$,  $(\mathcal{K}^\times/\mathcal{O}_X^\times)(X)=\Cart(X)$, and $f(\mathcal{K}^\times(X))=P(X)$ and hence $\Pic(X) \simeq \CaCl(X)$ as claimed. 
\end{proof}

\begin{myeg}
Let $A=\rma[x_1,...,x_n]$ be the polynomial semiring over $\rma$. Recall that $A$ is integral, but not cancellative. Consider $B=A/\sim$, where $\sim$ is a congruence relation on $A$ such that $f(x_1,...,x_n) \sim g(x_1,...,x_n)$ if and only if $f$ and $g$ are same as functions on $\rma^n$. It is well--known (see for example \cite{jun2015cech} or \cite{KalinaDaniel1}) that in this case $B$ is cancellative. Now let $X=\Spec B$. It follows from Proposition \ref{theorem: pic=car for semi} that $\Pic(X)=\CaCl(X)$. But we know from \cite[Corollary 4.23.]{jun2015cech} that $\Pic(X)$ is the trivial group and hence so is $\CaCl(X)$. In tropical geometry, one may obtain a ``reduced model" of a tropical scheme as above, which could be used to compute $\CaCl(X)$, see \cite[Remark 4.27.]{jun2015cech}.
\end{myeg}

\begin{rmk}
One can easily see (cf. \cite{flores2014picard}) that if $X$ is a cancellative monoid scheme and $K$ is a field, then $\CaCl(X)$ is isomorphic to $\CaCl(X_K)$, where $\CaCl(X_K)$ is the group of Cartier divisors of the scheme $X_K$ modulo principal Cartier divisors. 
\end{rmk}

\begin{rmk} In the case of set-theoretic tropicalizations of curves, there is a well-developed theory of divisors.
Let $C$ be an algebraic curve defined over a valued field and $G$ be the set-theoretic tropicalization of $C$, or rather its non-Archimedian skeleton. We remind the reader, that $G$ is a graph. There exists a notion of a Picard group of $G$ (cf. \cite{baker2007riemann}). Moreover, there is a well-defined map $\Pic(C) \rightarrow \Pic(G)$, which one may think of as the tropicalization map. If $C$ has genus $0$ then the Picard groups $\Pic(C)$ and $\Pic(G)$ are equal and isomorphic to $\mathbb{Z}$. However, in general $\Pic(C)$ and $\Pic(G)$ are rarely the same. For example, consider an elliptic curve $E$ degenerating to a cycle of $\mathbb{P}^1$'s. Note that $\Pic^0(E) = E$ but $\Pic(G) = S^1$, the unit circle. In the few cases when the results of M. Baker and S. Norine are comparable with the results of this paper, that is when $X$ is a smooth toric curve, it can be verified explicitly that the Picard groups of the set-theoretic tropicalization (non-Archimedian skeleton) and the scheme-theoretic tropicalization of $X$ are the same.
\end{rmk}

\begin{myeg}[Computing the Picard group of the tropicalization of $\mathbb{P}^1 \times \mathbb{P}^1$]
{
We consider $X_{\mathbb{C}}=\mathbb{P}^1\times \mathbb{P}^1$, which is a toric variety. We compute explicitly the Picard group of the tropical scheme $X_{\rma}$. Note that the calculation is analogous to the classical one. \\

Let $\mathcal{U} = \cup_{i=1}^{4} U_i$ be an open cover for $X_{\rma} = (\mathbb{P}^1 \times \mathbb{P}^1)_{\rma}$, where
\begin{align*}
    U_1 &= \Spec \mathbb{T}[x, y], \\
    U_2 &= \Spec \mathbb{T}[x, y^{-1}], \\
    U_3 &= \Spec \mathbb{T}[x^{-1}, x^{-1}y^{-1}], \\
    U_4 &= \Spec \mathbb{T}[x^{-1}, xy].
\end{align*}
Now we can see which sections over each $U_i$ are units in $\Gamma(U, \mathcal{O}_{X_{\rma}})$, namely,
\begin{equation}\label{u_0}
A_i=\mathcal{O}_{X_{\rma}}^\times(U_i)=\mathbb{R}, \quad \forall i=1,2,3,4. 
\end{equation}
For instance, for $i=1$, we have that $\mathcal{O}_{X_{\rma}}^\times(U_1)$ is the group of (tropically) multiplicatively invertible elements in $\mathcal{O}_{X_{\rma}}(U_1)=\rma[x,y]$. Since the only tropical polynomials, which are multiplicatively invertible, are nonzero constants, we have \eqref{u_0}. \\

Let $U_{ij}=U_i \cap U_j$. Now we have
\[
A_{12}= \mathcal{O}_{X_{\rma}}^\times(U_{12})=(\rma[x, y^{\pm 1}])^\times \cong \mathbb{R} \times \mathbb{Z},\] and similarly $A_{14}$, $A_{23}$, $A_{34}$ are isomorphic to $\mathbb{R} \times \mathbb{Z}$. Note, however, that
\[
A_{13} \cong A_{24} = \rma ([x^{\pm 1}, y^{\pm 1}])^\times \cong \mathbb{R} \times \mathbb{Z}^2.
\]
Similarly for $U_{ijk}=U_i \cap U_j \cap U_k$ one may also check that:
\[
A_{ijk}=\mathcal{O}_{X_{\rma}}^\times(U_{ijk})=(\rma[x^{\pm 1}, y^{\pm 1}])^\times\cong \mathbb{R} \times \mathbb{Z}^2.
\]
In particular, we get that 
\begin{equation}
\check{C}^0(X_{\rma},\mathcal{O}_{X_{\rma}}^\times) \cong \mathbb{R}^4, \quad \check{C}^1(X_{\rma}, \mathcal{O}_{X_{\rma}}^\times) \cong (\mathbb{R}\times\mathbb{Z})^4 \times (\mathbb{R}\times \mathbb{Z}^2)^2, \quad \check{C}^2(X_{\rma},\mathcal{O}_{X_{\rma}}^\times) \cong (\mathbb{R}\times \mathbb{Z}^2)^4.
\end{equation}

We can proceed with the computation using the usual \v{C}ech complex or we can do a ``tropical" computation as in \cite{jun2015cech} considering the following cochain complex:
\begin{equation}\label{cech complex}
\xymatrixcolsep{3pc}\xymatrix{
\ar[r]_-{d_{0}^-} \ar@<1ex>[r]^-{d_{0}^+} 
&
 A_{12} \times A_{13} \times A_{23} \times A_{14} \times A_{24} \times A_{34} \ar[r]_-{d_{1}^-} \ar@<1ex>[r]^-{d_{1}^+} 
&  A_{123} \times A_{124} \times A_{134} \times A_{234}  \ar[r]_-{d_{2}^-} \ar@<1ex>[r]^-{d_{2}^+}
& \cdots }
\end{equation}

The complex \eqref{cech complex} was introduced to deal with the lack of subtraction in tropical geometry; instead of one differential we consider pairs of morphisms $(d_{i}^+, d_{i}^-)$. Also, instead of the difference of two functions $f-g$, we will have a tuple $(f, g)$ and we replace the kernel condition $f-g = 0$ with $(f, g) \in \Delta$, where $\Delta$ is the diagonal. 

However, since $\mathcal{O}_X^\times$ is a sheaf of abelian groups, we can simply use the classical cochain complex. \\
Either way, we can see that the image of $d_0$ is generated by elements of the form $(f_1, f_2, f_3, f_4)$, where $f_i \in \mathbb{R}$ for all $i$ and $(f_1, f_2, f_3, f_4) \neq \lambda (1, 1, 1, 1)$ for some $\lambda \neq 0$ and thus the image of $d_0$ is isomorphic to $\mathbb{R}^3$. The kernel of $d_1$ is generated by elements of the following form:
\begin{equation}\label{forsm of generators}
(\frac{b}{c}y^k, bx^ly^k, cx^k, ax^l, \frac{ac}{b}x^ly^k, \frac{a}{b}y^k),\quad \textrm{for }a, b, c \in \mathbb{R}\textrm{ and } k, l \in \mathbb{Z}.
\end{equation}
Now, for the choice of $(f_1,f_2,f_3,f_4)=(b,c,1_\mathbb{T},\frac{b}{c})$, one can easily see that any element as in \eqref{forsm of generators} defines the same equivalence class as an element $(y^k, x^ly^k, x^k, x^l, x^ly^k, y^k)$ in $\cHH^1(X_{\rma}, \mathcal{O}_{X_{\rma}}^\times)$. Now it is easy to see that 
\[
\HH^1(X_{\rma},  \mathcal{O}_{X_{\rma}}^\times) = \cHH^1(X_{\rma}, \mathcal{O}_{X_{\rma}}^\times) \cong \mathbb{Z}^2.
\]

}
\end{myeg}

\begin{rmk}
In the above computation the fact that $X$ is a product of projective lines is not crucial. We can identify $\mathbb{P}^1 \times \mathbb{P}^1$  with its image under the Segre embedding into $\mathbb{P}^3$, i.e., $X_{\rma}$ is defined by a congruence generated by a single ``bend relation" $x_0x_1 \sim x_2x_3$. Both computations give the same result as expected.
\end{rmk}

\bibliography{picard}\bibliographystyle{plain}

\begin{thebibliography}{10}

\bibitem{aminiCaporaso}
Omid Amini and Lucia Caporaso.
\newblock Riemann-{R}och theory for weighted graphs and tropical curves.
\newblock {\em Advances in Mathematics}, 240:1--23, 2013.

\bibitem{baker2007riemann}
Matthew Baker and Serguei Norine.
\newblock Riemann--{R}och and {A}bel--{J}acobi theory on a finite graph.
\newblock {\em Advances in Mathematics}, 215(2):766--788, 2007.

\bibitem{BakerRabinoff}
Matthew Baker and Joseph Rabinoff.
\newblock The skeleton of the {J}acobian, the {J}acobian of the skeleton, and
  lifting meromorphic functions from tropical to algebraic curves.
\newblock {\em Int. Math. Res. Not. (16) 7436--7472}, 2015.

\bibitem{dustinTropComplex2}
Dustin Cartwright.
\newblock Combinatorial tropical surfaces.
\newblock {\em arXiv preprint arXiv:1506.02023}, 2015.

\bibitem{dustinTropComplex}
Dustin Cartwright.
\newblock Tropical complexes.
\newblock {\em arXiv preprint arXiv:1308.3813}, 2015.

\bibitem{chu2012sheaves}
Chenghao Chu, Oliver Lorscheid, and Rekha Santhanam.
\newblock Sheaves and {K}-theory for $\mathbb{ F}_1$-schemes.
\newblock {\em Advances in Mathematics}, 229(4):2239--2286, 2012.

\bibitem{con4}
Alain Connes and Caterina Consani.
\newblock From monoids to hyperstructures: in search of an absolute arithmetic.
\newblock {\em Casimir Force, {C}asimir Operators and the {R}iemann Hypothesis,
  de Gruyter}, pages 147--198, 2010.

\bibitem{con1}
Alain Connes and Caterina Consani.
\newblock Schemes over $\mathbb{ F}_1$ and zeta functions.
\newblock {\em Compos. Math}, 146(6):1383--1415, 2010.

\bibitem{con2}
Alain Connes and Caterina Consani.
\newblock On the notion of geometry over $\mathbb{F}_1$.
\newblock {\em Journal of Algebraic Geometry 20 n.3, 525-557}, 2011.

\bibitem{connes2014arithmetic}
Alain Connes and Caterina Consani.
\newblock The arithmetic site.
\newblock {\em Comptes Rendus Mathematique Ser. I 352, 971--975}, 2014.

\bibitem{connes2016geometry}
Alain Connes and Caterina Consani.
\newblock Geometry of the scaling site.
\newblock {\em C. Sel. Math. New Ser. doi:10.1007/s00029-017-0313-y}, 2017.

\bibitem{connes2017homological}
Alain Connes and Caterina Consani.
\newblock Homological algebra in characteristic one.
\newblock {\em arXiv preprint arXiv:1703.02325}, 2017.

\bibitem{cortinas2015toric}
Guillermo Corti{\~n}as, Christian Haesemeyer, Mark~E Walker, and Charles
  Weibel.
\newblock Toric varieties, monoid schemes and cdh descent.
\newblock {\em Journal f{\"u}r die reine und angewandte Mathematik (Crelles
  Journal)}, 2015(698):1--54, 2015.

\bibitem{Deitmar}
Anton Deitmar.
\newblock Schemes over $\mathbb{F}_1$.
\newblock In {\em Number fields and function fields—two parallel worlds},
  pages 87--100. Springer, 2005.

\bibitem{deitmar2008f1}
Anton Deitmar.
\newblock $\mathbb{F}_1$-schemes and toric varieties.
\newblock {\em Contributions to Algebra and Geometry}, 49(2):517--525, 2008.

\bibitem{flores2017vcech}
Jaret Flores, Oliver Lorscheid, and Matt Szczesny.
\newblock {\v{C}}ech cohomology over $\mathbb{F}_{1^2}$.
\newblock {\em Journal of Algebra}, 485:269--287, 2017.

\bibitem{flores2014picard}
Jaret Flores and Charles Weibel.
\newblock Picard groups and class groups of monoid schemes.
\newblock {\em Journal of Algebra}, 415:247--263, 2014.

\bibitem{DhruvlogPic}
Tyler Foster, Dhruv Ranganathan, Mattia Talpo, and Martin Ulirsch.
\newblock Logarithmic picard groups, chip firing, and the combinatorial rank.
\newblock {\em arXiv preprint arXiv:1611.10233}, 2016.

\bibitem{Fulton}
W.~Fulton.
\newblock Introduction to toric varieties.
\newblock {\em Annals of Math. Studies 13, Princeton Univ. Press}, 1993.

\bibitem{gathmanKerber}
Andreas Gathmann and Michael Kerber.
\newblock A {R}iemann-{R}och theorem in tropical geometry.
\newblock {\em Mathematische Zeitschrift}, 259:217--230, 2008.

\bibitem{giansiracusa2016equations}
Jeffrey Giansiracusa and Noah Giansiracusa.
\newblock Equations of tropical varieties.
\newblock {\em Duke Mathematical Journal}, 165(18):3379--3433, 2016.

\bibitem{semibook}
Jonathan Golan.
\newblock Semirings and their applications, updated and expanded version of the
  theory of semirings, with applications to mathematics and theoretical
  computer science, 1999.

\bibitem{KalinaDaniel1}
D{\'a}niel Jo{\'o} and Kalina Mincheva.
\newblock Prime congruences of idempotent semirings and a {N}ullstellensatz for
  tropical polynomials.
\newblock {\em C. Sel. Math. New Ser. doi:10.1007/s00029-017-0322-x}, 2017.

\bibitem{jaiungthesis}
Jaiung Jun.
\newblock {\em Algebraic geometry over semi-structures and hyper-structures of
  characteristic one}.
\newblock PhD thesis, Johns Hopkins University, Baltimore, Maryland, USA, May
  2015.

\bibitem{jun2015cech}
Jaiung Jun.
\newblock {\v{C}}ech cohomology of semiring schemes.
\newblock {\em Journal of Algebra}, 483:306--328, 2017.

\bibitem{oliver1}
Oliver Lorscheid.
\newblock The geometry of blueprints: Part{ I}: Algebraic background and scheme
  theory.
\newblock {\em Advances in Mathematics}, 229(3):1804--1846, 2012.

\bibitem{oliver4}
Oliver Lorscheid.
\newblock Scheme theoretic tropicalization.
\newblock {\em arXiv preprint arXiv:1508.07949}, 2015.

\bibitem{maclagan2014tropical}
Diane Maclagan and Felipe Rinc{\'o}n.
\newblock Tropical schemes, tropical cycles, and valuated matroids.
\newblock {\em arXiv preprint arXiv:1401.4654}, 2014.

\bibitem{maclagan2016tropical}
Diane Maclagan and Felipe Rinc{\'o}n.
\newblock Tropical ideals.
\newblock {\em arXiv preprint arXiv:1609.03838}, 2016.

\bibitem{manin1995lectures}
Yuri Manin.
\newblock Lectures on zeta functions and motives (according to {D}eninger and
  {K}urokawa).
\newblock {\em Ast{\'e}risque}, 228(4):121--163, 1995.

\bibitem{MikhZharkov}
Grigory Mikhalkin and Ilia Zharkov.
\newblock Tropical curves, their {J}acobians and theta functions.
\newblock {\em Curves and abelian varieties}, 465:203--230, 2008.

\bibitem{pena2009mapping}
Javier~L{\'o}pez Pe{\~n}a and Oliver Lorscheid.
\newblock Mapping $\mathbb{F}_1$-land: An overview of geometries over the field
  with one element.
\newblock {\em Noncommutative geometry, arithmetic, and related topics,
  241-265, Johns Hopkins Univ. Press}, 2011.

\bibitem{serre1955faisceaux}
Jean-Pierre Serre.
\newblock Faisceaux alg{\'e}briques coh{\'e}rents.
\newblock {\em Annals of Mathematics}, pages 197--278, 1955.

\bibitem{soule2004varietes}
Christophe Soul{\'e}.
\newblock Les vari{\'e}t{\'e}s sur le corps {\`a} un {\'e}l{\'e}ment.
\newblock {\em Mosc. Math. J}, 4(1):217--244, 2004.

\bibitem{tits1956analogues}
Jacques Tits.
\newblock Sur les analogues alg{\'e}briques des groupes semi-simples complexes.
\newblock In {\em Colloque d'alg{\`e}bre sup{\'e}rieure, Bruxelles}, pages
  261--289, 1956.

\end{thebibliography}

\end{document}